\begin{document}
\long\def\red#1{{\color{red}#1}}
\long\def\yel#1{{\color{yellow}#1}}
\long\def\mag#1{{\color{magenta}#1}}
\title{On the expected number of equilibria in a multi-player multi-strategy evolutionary game
}

\titlerunning{Expected number of equilibria}        

\author{Manh Hong Duong         \and
        The Anh Han 
}


\institute{Manh Hong Duong \at
              Mathematics Institute, Zeeman Building, University of Warwick, Coventry CV4 7AL, UK. \\
              \email{m.h.duong@warwick.ac.uk}           
           \and
           The Anh Han \at
               School of Computing, Teesside University, Borough Road, Middlesbrough, UK TS1 3BA \\
               \email{ T.Han@tees.ac.uk}
}

\date{Received: date / Accepted: date}

\maketitle

\begin{abstract}
In this paper, we analyse the mean number $E(n,d)$ of internal equilibria in a general $d$-player $n$-strategy evolutionary game where the agents' payoffs are normally distributed. First, we give a computationally implementable formula for the general case. Next, we characterize the asymptotic behaviour of $E(2,d)$, estimating its lower and upper bounds as $d$ increases. Then we provide a closed formula for $E(n,2)$. Two important consequences are obtained from this analysis. On the one hand, we show that in both cases the probability of seeing the maximal possible number of equilibria tends to zero when $d$ or $n$ respectively goes to infinity. On the other hand, we demonstrate that the expected number of stable equilibria is bounded within a certain interval. Finally, for larger $n$ and $d$, numerical results are provided and discussed.   \\


\keywords{Evolutionary game \and multi-player games \and multiple strategies \and random polynomials\and number of equilibria\and random games.}
\end{abstract}

\section{Introduction}
Evolutionary game theory is the suitable mathematical framework whenever there is frequency dependent selection -- the fitness of an individual does not only depend on its strategy, but also on the composition of the population in relation with (multiple) other strategies  \cite{maynard-smith:1982to,hofbauer:1998mm,nowak:2006bo}. 
The payoff from the games is interpreted as individual fitness, naturally leading  to a dynamical approach. 
As in classical game theory with the  Nash equilibrium \cite{mclennan2005asymptotic,mclennan2005expected},  
the analysis of properties of equilibrium points in evolutionary game theory has been of special interest \cite{maynard-smith:1982to,broom:1997aa,gokhale:2010pn}. 
Herein, equilibrium points of a dynamical system predict the composition of strategy frequencies where all the strategies have the same average fitness. Biologically, these points can predict a co-existence of different types in a population and the maintenance of polymorphism. 

Recently, some attention has been paid to  both numerical and analytical studies  of equilibrium points and their stability in random evolutionary games \cite{gokhale:2010pn,HTG12}. 
 The focus was  on analyzing the probability of observing a certain number of equilibria (whether counting all equilibria or only the stable ones) if the payoff entries are randomly drawn. 
 This probability allows one to predict the complexity of the interactions as the number of strategies and the number of players in the game increase, especially when the environments are unknown or changing rapidly over time \cite{fudenberg:1992bv,gross:2009aa}. Furthermore, these studies have paid substantial attention to the maximal number of equilibrium points and the attainability of the patterns of evolutionarily stable strategies, as knowing them is insightful, and historically they have been studied extensively,  not only in classical and evolutionary game theory, but also in other fields such as population genetics \cite{maynard-smith:1982to,karlin:1980aa,vickers:1988aa,vickers:1988ab,karlin:1970tp,vickers:1988ac,haigh1989large,broom1994sequential,broom:1993pa,broom:1997aa,altenberg:2010tp,gokhale:2010pn,HTG12,broom2013game}. However, as the studies deal with the concrete numbers of equilibrium points, they have needed to take a direct approach that consists of solving a system of polynomial equations, the degree of which increases with the number of players in a game. As such, the mathematical analysis was mostly restricted to evolutionary games with a small number of players, due to the impossibility of solving general polynomial equations of a high degree \cite{able:1824aa,HTG12}.



In this paper, we ask instead the  question:  what is the mean or expected number of equilibria that one can observe if the payoff matrix entries of the game are randomly drawn? 
Knowing the mean number of equilibria not only gives important insights into the overall complexity of the interactions as the number of participating players in the game and the potential strategies that the players can adopt are magnified. It also enables us to predict the boundaries of the concrete numbers of (stable) equilibrium points such as the maximal one as we show later on in the paper. 
By connecting to  the theory of random polynomials \cite{EK95}, we first provide an exact, computationally implementable, formula for the expected number of equilibria in a general multi-player multi-strategy random game  when the payoff entries are  normally  distributed. Secondly, we derive lower and upper bounds of such a formula for the case of two-player games and provide an explicit formula for the case of two-strategy games. As a consequence, we can derive similar bounds when considering only equilibrium points that are stable.
Finally, numerical results are provided and discussed when there are more players and strategies. 

The rest of the paper is structured as follows. In Section~\ref{sec: M2M} we introduce the models and methods: the replicator equation in evolution game theory and the random polynomial theory are summarized in Sections~\ref{sec: game theory} and~\ref{sec: RP theory}, respectively. The link between them, which is a major aspect of our method,  is described in Section~\ref{sec: Link}. The main results of this paper are presented in Section~\ref{sec: Results}, starting with two-strategy games in Section~\ref{sec: two-stategy}, then with two-player games in Section~\ref{sec: two-player}, and lastly, with the general case of games with arbitrary numbers of players and strategies in Section \ref{sec: 2multi}. We compare our results with related ones in the literature and discuss  some future directions in Section~\ref{sec: discussion}. Finally, some detailed computations are given in the Appendix.

\section{Models and Methods}
\label{sec: M2M}
\subsection{Evolutionary game theory and replicator dynamics}
\label{sec: game theory}
The classical approach to evolutionary games is replicator dynamics  \cite{taylor:1978wv,zeeman:1980ze,hofbauer:1998mm,schuster:1983le,nowak:2006bo}, describing that whenever a strategy has a fitness larger than the  average fitness of the population, it is expected to  spread. Formally, let us consider an infinitely large population with $n$ strategies, numerated from 1 to $n$. They have frequencies $x_i$, $1 \leq i \leq n$, respectively, satisfying that $0 \leq x_i \leq 1$  and $\sum_{i = 1}^{n} x_i = 1$. The interaction of the individuals in the population is in randomly selected groups of $d$ participants, that is, they play and obtain their fitness from   $d$-player games.  We consider here symmetrical games (e.g. the public goods games and their generalizations \cite{hardin:1968mm,hauert:2002in,santos:2008xr,pacheco:2009aa,Han:2014tl}) in which the order of the participants is  irrelevant. Let $\alpha^{i_0}_{i_1,\ldots,i_{d-1}}$ be the payoff of the focal player, where $i_0$ ($1 \leq i_0 \leq n$) is the strategy of the focal player, and let $i_k$ (with $1 \leq i_k \leq n$ and $1 \leq k \leq d-1$) be the strategy of the player in position $k$. 
These payoffs form a $(d-1)$-dimensional payoff matrix \cite{gokhale:2010pn}, which satisfies  (because of the game symmetry)  
\begin{equation}
\label{eq:symmetry}
\alpha^{i_0}_{i_1,\ldots,i_{d-1}} = \alpha^{i_0}_{i_1',\ldots,i_{d-1}'},
\end{equation}
 whenever $\{i_1'\ldots,i_{d-1}'\}$ is a permutation of  $\{i_1\ldots,i_{d-1}\}$. This means that  only the fraction of each strategy in the game matters. 
 
The average payoff or fitness of the focal player is given by  

\begin{equation}
\pi_{i_0} = \sum\limits_{i_1,\ldots,i_{d-1}=1}^{n}\alpha^{i_0}_{i_1,\ldots,i_{d-1}}\prod\limits_{k=1}^{d-1}x_{i_k}.
\end{equation}
By abuse of notation, let us denote $\alpha^{i_0}_{k_{1}, ..., k_{n} } := \alpha^{i_0}_{i_1,\ldots,i_{d-1}}$, where $k_i$, $1 \leq i \leq n$, with $\sum^{n}_{i = 1}k_i = d-1$,   is the number of players using strategy $i$ in $\{i_1,\ldots,i_{d-1}\}$. 
Hence, from Equation \eqref{eq:symmetry}, the fitness  of strategy $i_0$ can be rewritten  as follows 
\begin{equation}
\label{eq: eqn for fitness}
\pi_{i_0}  = \sum\limits_{\substack{0\leq k_{1}, ..., k_{n}\leq d-1,\\ \sum\limits^{n}_{i = 1}k_i = d-1  }}\alpha^{i_0}_{k_{1}, ..., k_{n} }\begin{pmatrix}
d-1\\
k_{1}, ..., k_n
\end{pmatrix} \prod\limits_{i=1}^{n}x_{i}^{k_i} \quad \text{ for } i_0 = 1, \dots, n,
\end{equation}
where 
$\begin{pmatrix}
d-1\\
k_{1}, \dots, k_{n}
\end{pmatrix} = \frac{(d-1)!}{\prod\limits _{k=1}^n k_i! }$
are the multinomial coefficients. 

Now  the replicator equations for games with $n$ strategies can be written as follows  \cite{hofbauer:1998mm,sigmund:2010bo} 
\begin{equation} 
\label{eq:replicator_eqs_2playersGame}
\dot{x_i} = x_i \left(\pi_i - \langle \pi \rangle \right) \quad\quad \text{ for } i = 1,\dots,n-1,
\end{equation} 
where $\langle \pi \rangle = \sum^n_{k = 1} x_k \, \pi_k$ is the average payoff of the population. The equilibrium points of the system are given by the points $(x_1, \dots, x_n)$ satisfying the condition that the  fitness of all strategies are the same. That is, they are represented by solutions of  the system of equations 
\begin{equation} 
\label{eq:system_fitness}
\pi_i = \pi_n \quad \text { for all } 1 \leq i \leq n-1.
\end{equation} 

Subtracting from each of the equations the term $\pi_n$  we obtain  a system of $n-1$ polynomials of degree $d-1$
\begin{equation}
\label{eq: eqn for fitness2}
\sum\limits_{\substack{0\leq k_{1}, ..., k_{n}\leq d-1,\\ \sum^{n}\limits_{i = 1}k_i = d-1  }}\beta^i_{k_{1}, ...,k_{n-1} }\begin{pmatrix}
d-1\\
k_{1}, ..., k_n
\end{pmatrix} \prod\limits_{i=1}^{n}x_{i}^{k_i} = 0\quad \text{ for } i = 1, \dots, n-1,
\end{equation}
where $\beta^{i}_{k_{1}, ..., k_{n-1} } := \alpha^{i}_{k_{1}, ..., k_{n} } -\alpha^{n}_{k_{1}, ..., k_{n} } $. Assuming that all the payoff entries have the same probability distribution, then all $\beta^{i}_{k_{1}, ..., k_{n-1} }$, $i = 1, \dots, n-1$,  have symmetric distributions, i.e. with mean 0 (see also the proof in \cite{HTG12}).

In the following analysis, we focus on   internal equilibrium points \cite{gokhale:2010pn,HTG12}, i.e. $0 < x_i < 1$ for all $1 \leq i \leq n-1$. Hence, by using the transformation $y_i = \frac{x_i}{x_n}$, with $0< y_i < +\infty$ and $1 \leq i \leq n-1$, dividing the left hand side of the above equation by $x_n^{d-1}$ we obtain the following equation in terms of $(y_1,\ldots,y_{n-1})$ that is equivalent to~\eqref{eq: eqn for fitness2} 
\begin{equation}
\label{eq: eqn for fitnessy}
 \sum\limits_{\substack{0\leq k_{1}, ..., k_{n-1}\leq d-1,\\  \sum\limits^{n-1}_{i = 1}k_i \leq d-1  }}\beta^i_{k_{1}, ..., k_{n-1} }\begin{pmatrix}
d-1\\
k_{1}, ..., k_{n}
\end{pmatrix} \prod\limits_{i=1}^{n-1}y_{i}^{k_i} = 0\quad  \quad \text{ for } i = 1, \dots, n-1.
\end{equation}
As stated, one of the main goals of this article is to compute the expected number of (internal) equilibria in a general  $n$-strategy $d$-player random evolutionary game. That consists in  computing  the expected  number of solutions $(y_1, \dots, y_{n-1}) \in {\mathbb{R}_+}^{n-1}$ of the system of $(n-1)$ polynomials of degree $(d-1)$ in (\ref{eq: eqn for fitnessy}). Furthermore, herein our analysis focuses on  payoff matrices  with  normally distributed entries. It is known that, even for $n=2$, it is impossible to analytically solve the system whenever $d > 5$ \cite{able:1824aa}, as seen in \cite{HTG12}. Hence,  it is not feasible to use this direct approach of analytically solving the system   if one wants to deal with   games with a large number of players and with multiple strategies. In this work, we address this issue by connecting to the theory of random polynomials described in the following section.   
\subsection{Random polynomial theory}
\label{sec: RP theory}
Keeping the form of Eq.~\eqref{eq: eqn for fitnessy} in mind, we consider a system of $n-1$ random polynomials of degree $d-1$,
\begin{equation}
\label{eq: eqn for systems}
 \sum\limits_{\substack{0\leq k_{1}, ..., k_{n-1}\leq d-1,\\  \sum^{n-1}_{i = 1}k_i \leq d-1  }}a^i_{k_{1}, ..., k_{n-1} }\prod\limits_{i=1}^{n-1}y_{i}^{k_i} = 0\quad  \quad \text{ for } i = 1, \dots, n-1,
\end{equation}
where $a^i_{k_{1}, ..., k_{n-1} }$ are independent and identically  distributed (i.i.d.) multivariate normal random vectors with mean zero and covariance matrix $C$. Denote by $v(y)$
the vector whose components are all the monomials $\big\{\prod\limits_{i=1}^{n-1}y_i^{k_i}\big\}$ where $0\leq k_i\leq d-1$ and $\sum\limits_{i=1}^{n-1}k_i\leq d-1$. Let $A$ denote the random matrix whose $i-$th row contains all coefficients $a^i_{k_1,\ldots, k_{n-1}}$. Then~\eqref{eq: eqn for systems} can be re-written as
\begin{equation}
\label{eq: eqn for system 2}
A v(y)=0.
\end{equation}
The following theorem is the starting point of the analysis of this paper.
\begin{theorem}\cite[Theorem 7.1]{EK95}
\label{theo: EK95}
Let $U$ be any measurable subset of $\mathbb{R}^{n-1}$. Assume that the rows of $A$ are i.i.d. multivariate normal random vectors with mean zero and covariance matrix $C$. The expected number of real roots of the system of equations~\eqref{eq: eqn for system 2} that lie in the set $U$ is given by
\[
\pi^{-\frac{n}{2}}\Gamma\left(\frac{n}{2}\right)\int_U\left(\det\left[\frac{\partial^2}{\partial x_i\partial y_j}(\log v(x)^T C v(y))\big|_{y=x=t}\right]_{ij}\right)^\frac{1}{2}\, dt.
\]
\end{theorem}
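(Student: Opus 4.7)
The plan is to apply the Kac-Rice formula to the random field $f(y):=Av(y)\in\mathbb{R}^{n-1}$, and then to exploit the Gaussian structure together with the independence of the rows of $A$ to collapse the local density into the stated $\log$-determinantal form. Set $K(x,y):=v(x)^{T}Cv(y)$; each coordinate $f_i(y)=a^i\cdot v(y)$ is a centered Gaussian field with covariance $K$, and the $f_i$'s are independent because the rows $a^i$ of $A$ are i.i.d. Assuming $K(t,t)>0$ on $U$ and that $(f(t),Df(t))$ is a.e.\ non-degenerate (both of which follow from the linear independence of $v$ and its partials on open sets, together with the rank of $C$ on the relevant subspace), Kac-Rice yields
\begin{equation*}
\mathbb{E}\bigl[\#\{y\in U:f(y)=0\}\bigr]\;=\;\int_{U}\varphi_{f(t)}(0)\,\mathbb{E}\bigl[|\det Df(t)|\,\big|\,f(t)=0\bigr]\,dt,
\end{equation*}
where $\varphi_{f(t)}$ denotes the density of $f(t)$.

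Next I would compute the two factors. Since $f(t)$ is centered Gaussian with covariance $K(t,t)\,I_{n-1}$, one has $\varphi_{f(t)}(0)=(2\pi K(t,t))^{-(n-1)/2}$. For the conditional expectation, within each row $i$ the pair $(f_i(t),\nabla f_i(t))$ is jointly Gaussian, so the Gaussian regression formula gives that $\nabla f_i(t)\mid f_i(t)=0$ is centered Gaussian with covariance
\begin{equation*}
\bigl[\partial_{x_j}\partial_{y_k}K - K^{-1}(\partial_{x_j}K)(\partial_{y_k}K)\bigr]_{x=y=t}\;=\;K(t,t)\,M(t),
\end{equation*}
where $M(t)_{jk}:=\partial_{x_j}\partial_{y_k}(\log K)|_{x=y=t}$ is precisely the matrix in the theorem; the identity between the two sides is a straightforward differentiation of $\log K$. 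By independence of the rows of $A$, the rows of $Df(t)$ remain independent after conditioning on $f(t)=0$, so
$Df(t)\mid f(t)=0 \;\stackrel{d}{=}\;\sqrt{K(t,t)}\,Z\,M(t)^{1/2}$
for an $(n-1)\times(n-1)$ matrix $Z$ of i.i.d.\ standard Gaussians, and hence
\begin{equation*}
\mathbb{E}\bigl[|\det Df(t)|\,\big|\,f(t)=0\bigr]\;=\;K(t,t)^{(n-1)/2}\,\det M(t)^{1/2}\,\mathbb{E}[|\det Z|].
\end{equation*}

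Multiplying this by $\varphi_{f(t)}(0)$, the powers of $K(t,t)$ cancel and only a universal constant multiplies $\det M(t)^{1/2}$ inside the integral. The remaining ingredient is the Ginibre moment identity $\mathbb{E}[|\det Z|]=2^{(n-1)/2}\Gamma(n/2)/\sqrt{\pi}$, obtained cleanly from the $QR$ decomposition of $Z$, which writes $|\det Z|$ as a product of independent chi variables whose means telescope. Combined with $(2\pi)^{-(n-1)/2}$, this produces exactly the prefactor $\pi^{-n/2}\Gamma(n/2)$ claimed in the theorem. The part I expect will require the most care is not the algebra above but the justification of Kac-Rice on this non-stationary Gaussian field and the verification that the row-wise conditioning does indeed yield the claimed conditional law of $Df(t)$; both reduce to checking that the joint Gaussian law of $(f(t),Df(t))$ is non-degenerate away from a Lebesgue-null set of $t$, which in turn follows from the linear independence of $v$ and its partial derivatives on open subsets of $\mathbb{R}^{n-1}$.
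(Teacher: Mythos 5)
Your derivation is correct, but note that the paper does not actually prove this statement: it is imported verbatim as \cite[Theorem 7.1]{EK95}, so the only ``proof'' on offer in the paper is a citation. Your argument is therefore best compared with Edelman and Kostlan's own proof, and the two routes are genuinely different. Edelman--Kostlan obtain the formula by integral geometry: they projectivize the moment map $t\mapsto v(t)/\|C^{1/2}v(t)\|$ (after absorbing $C$), interpret the expected number of zeros in $U$ as a normalized volume of the image of $U$ under this map via a kinematic/Crofton-type argument, and recognize the volume element of the pulled-back metric as exactly $\bigl(\det[\partial_{x_i}\partial_{y_j}\log v(x)^TCv(y)]\bigr)^{1/2}$. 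You instead run Kac--Rice on the Gaussian field $f(y)=Av(y)$, use Gaussian regression to identify the conditional covariance of $Df(t)$ given $f(t)=0$ as $K(t,t)M(t)$ with $M$ the log-kernel Hessian, exploit row independence to reduce $\mathbb{E}[|\det Df(t)|\mid f(t)=0]$ to a Ginibre moment, and recover the prefactor $\pi^{-n/2}\Gamma(n/2)$ from $\mathbb{E}[|\det Z|]=2^{(n-1)/2}\Gamma(n/2)/\sqrt{\pi}$; all of these steps check out, including the cancellation of the powers of $K(t,t)$. What each approach buys: the geometric proof gives the integrand an intrinsic meaning (a volume form) and makes scale invariance in $C$ obvious, while your Kac--Rice route makes transparent exactly where Gaussianity and the i.i.d.-rows hypothesis enter (a point the paper itself later worries about in Remark 3) and is the natural starting point for relaxing those hypotheses. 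The regularity caveats you flag (nondegeneracy of $(f(t),Df(t))$ off a null set, $K(t,t)>0$) are the right ones and are harmless here since $C$ in the paper's application is diagonal with strictly positive entries including the constant monomial.
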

\subsection{From random polynomial theory to evolutionary game theory} 
\label{sec: Link}
Let $E(n,d)$ be the number of internal equilibria in a $d-$player random game with $n$ strategies. As has been shown in Section~\ref{sec: game theory}, $E(n,d)$ is the same as the number of positive solutions of Eq.~\eqref{eq: eqn for fitnessy}. 
We will apply Theorem~\ref{theo: EK95} with $a^i_{k_{1}, ..., k_{n-1} }=\beta^i_{k_{1}, ..., k_{n-1} }\begin{pmatrix}
d-1\\
k_{1}, ..., k_{n}
\end{pmatrix}$ and $U=[0,\infty)^{n-1}\subset \mathbb{R}^{n-1}$.

Suppose that all $\beta^{i}_{k_{1}, ..., k_{n-1} }$ are  Gaussian distributions with mean $0$ and variance $1$, then for each $i$ ($1 \leq i \leq n-1$), $A^i=\left\{\beta^i_{k_{1}, ..., k_{n-1} }\begin{pmatrix}
d-1\\
k_{1}, ..., k_{n}
\end{pmatrix}\right\}
$ is a multivariate normal random vector with mean zero and covariance matrix $C$ given by
\begin{equation}
\label{eq: matrix C}
C=\mathrm{diag} \left(\begin{pmatrix}
d-1\\
k_{1}, ..., k_{n}
\end{pmatrix}^2\right)_{0\leq k_i\leq d-1,~\sum\limits_{i=1}^{n-1}k_i\leq d-1}.
\end{equation}
We obtain the following lemma, which is a direct application of Theorem \ref{theo: EK95}.
\begin{lemma}
\label{lemma: E(n,d)}
Assume that $\{A^i\}_{1\leq i\leq n-1}$ are independent normal random vectors with mean zero and covariance matrix $C$ as in \eqref{eq: matrix C}. The expected number of internal equilibria in a \emph{d}-player \emph{n-}strategy random game  is given by 
\begin{equation}
\label{eq: expected number or EQ}
E(n,d)=\pi^{-\frac{n}{2}}\Gamma\left(\frac{n}{2}\right)\underbrace{\int_0^\infty\ldots\int_0^\infty}_{n-1\,\text{times}}\left(\det\left[\frac{\partial^2}{\partial x_iy_j}(\log v(x)^T C v(y))\big|_{y=x=t}
\right]_{ij}\right)^\frac{1}{2}\,dt,
\end{equation}
where
\begin{equation}
v(x)^T C v(y)=\sum\limits_{\substack{0\leq k_{1}, ..., k_{n-1}\leq d-1,\\ \sum\limits^{n-1}_{i = 1}k_i\leq d-1  }}\begin{pmatrix}
d-1\\
k_1,\ldots,k_{n}
\end{pmatrix}^2\prod\limits_{i=1}^nx_i^{k_i}y_i^{k_i}.
\end{equation}
\end{lemma}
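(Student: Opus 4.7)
The plan is to identify the proof as essentially a direct pullback of Theorem~\ref{theo: EK95} along the substitution $a^i_{k_1,\ldots,k_{n-1}} = \beta^i_{k_1,\ldots,k_{n-1}}\binom{d-1}{k_1,\ldots,k_n}$, with the set $U = [0,\infty)^{n-1}$. The bridge to game theory is already built in Section~\ref{sec: game theory}: an internal equilibrium $(x_1,\ldots,x_n)$ corresponds, via the transformation $y_i = x_i/x_n$, to a positive solution of the polynomial system~\eqref{eq: eqn for fitnessy}. Therefore $E(n,d)$ equals the expected number of real roots of~\eqref{eq: eqn for fitnessy} lying in $(0,\infty)^{n-1}$, which is precisely the quantity computed by Theorem~\ref{theo: EK95} once the covariance structure is specified.

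First I would verify that the coefficient vectors $A^i = \{\beta^i_{k_1,\ldots,k_{n-1}}\binom{d-1}{k_1,\ldots,k_n}\}$ are (under the standing assumption of the lemma) multivariate normal with mean zero and the diagonal covariance matrix $C$ given in~\eqref{eq: matrix C}. Normality and zero mean are immediate from the hypothesis that the $\beta^i_{k_1,\ldots,k_{n-1}}$ are standard Gaussian, and the diagonal form follows because multiplying a diagonal covariance (from independence of the $\beta$'s across distinct index tuples within a fixed $i$) by the deterministic scaling factors $\binom{d-1}{k_1,\ldots,k_n}$ squares those factors on the diagonal.

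Next I would apply Theorem~\ref{theo: EK95} with $U = [0,\infty)^{n-1}$, yielding directly the integral formula~\eqref{eq: expected number or EQ}. The remaining task is a short computation: evaluate $v(x)^T C v(y)$. Since $C$ is diagonal in the monomial basis $\{\prod_i y_i^{k_i}\}$, the quadratic form is simply
\[
v(x)^T C v(y) = \sum_{\substack{0\leq k_1,\ldots,k_{n-1}\leq d-1\\ \sum_i k_i\leq d-1}} \binom{d-1}{k_1,\ldots,k_n}^2 \prod_{i=1}^{n-1} x_i^{k_i} y_i^{k_i},
\]
where $k_n := d-1-\sum_{i=1}^{n-1}k_i$; rewriting the product over $i=1,\ldots,n-1$ as a product over $i=1,\ldots,n$ with the convention $x_n = y_n = 1$ gives the stated expression.

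The only genuinely delicate point, rather than an obstacle, is checking that independence is correctly handled: the random vectors $A^i$ are assumed independent in the hypothesis (this is an assumption, not a theorem, since the original $\beta^i_{k_1,\ldots,k_{n-1}}$'s share the $\alpha^n$ term across different $i$'s and are therefore in general correlated across $i$), and the diagonal form of $C$ relies on the $\beta^i$'s being independent across distinct index tuples for a fixed $i$, which does follow from the i.i.d.\ assumption on the payoff entries. Once this bookkeeping is in place, no further computation is needed and the lemma is immediate from Theorem~\ref{theo: EK95}.
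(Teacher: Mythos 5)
Your proposal is correct and follows exactly the route the paper takes: the paper states this lemma as a direct application of Theorem~\ref{theo: EK95} with $a^i_{k_1,\ldots,k_{n-1}}=\beta^i_{k_1,\ldots,k_{n-1}}\binom{d-1}{k_1,\ldots,k_n}$, $U=[0,\infty)^{n-1}$, and the diagonal covariance matrix $C$ of \eqref{eq: matrix C}, giving no further argument. Your additional bookkeeping on the independence of the $A^i$ (an assumption rather than a consequence, since the $\beta^i$ share the $\alpha^n$ term across $i$) is exactly the caveat the paper itself raises later in Remark~\ref{rem: assumption}.
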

Denote by $L$ the matrix with entries
\begin{align*}
L_{ij}&=\frac{\partial^2}{\partial x_iy_j}(\log v(x)^T C v(y))\big|_{y=x=t},
\end{align*}
then $E(n,d)$ can be written as
\begin{equation}
\label{eq: expected number or EQ 2}
E(n,d)=\pi^{-\frac{n}{2}}\Gamma\left(\frac{n}{2}\right)\underbrace{\int_0^\infty\ldots\int_0^\infty}_{n-1\,\text{times}}\left(\det L\right)^\frac{1}{2}dt.
\end{equation}
It has been shown that a $d$-player $n$-strategy game has at most $(d-1)^{n-1}$ isolated internal equilibria (and this bound is sharp) \cite{HTG12}. We denote by $p_i$, $1 \leq i \leq (d-1)^{n-1}$, the probability that the game has exactly $i$ such equilibria. Then $E(n,d)$ can also be defined through $p_i$ as follows 
\begin{equation}
\label{eq: definition E}
E(n,d)=\sum_{i=1}^{(d-1)^{n-1}}i \cdot p_i. 
\end{equation}
\section{Results}
\label{sec: Results}
We start with the case where there are two strategies ($n = 2$), analytically deriving the upper and lower bounds for $E(2, d)$. Next we derive exact results for games with two players ($d = 2$). 
Finally, we provide numerical results and discussion for the general case with an arbitrary number of players and strategies. For ease of representation, we  start by assuming that the coefficients $\beta^i_{k_1,\ldots,k_{n-1}}$ are standard normal distributions. We then show that the results do not change if they have arbitrary identical normal distributions. 
%

\subsection{Multi-player two-strategy games}
\label{sec: two-stategy}
We first consider games with an arbitrary number of players, but having only two strategies, i.e. $n = 2$. In this case,   Equation (\ref{eq: eqn for fitnessy}) is simplified to the following  univariate polynomial equation of degree $d-1$ with $y \in \mathbb{R}_+$
\begin{equation}
\label{eq: eqn for y}
\sum\limits_{k=0}^{d-1}\beta_k\begin{pmatrix}
d-1\\
k
\end{pmatrix}y^k=0.
\end{equation}
The following lemma describes a closed form  of $E(2,d)$.
\begin{lemma}
\label{lemma: density of zeros}
Assume that $\beta_k$ are independent Gaussian distributions with variance $1$ and mean $0$. Then the number of internal equilibria, $E(2,d)$, in a $d-$player random game with two strategies is given by
\begin{equation}
\label{eq: E(d)}
E(2,d)=\int_0^\infty f(t)\,dt,
\end{equation}
where
\begin{equation}
\label{eq: density of zeros}
f(t)=\frac{1}{\pi}\left[\frac{\sum\limits_{k=1}^{d-1}k^2\begin{pmatrix}
d-1\\
k
\end{pmatrix}^2t^{2(k-1)}}{\sum\limits_{k=0}^{d-1}\begin{pmatrix}
d-1\\
k
\end{pmatrix}^2t^{2k}}-\left(\frac{\sum\limits_{k=1}^{d-1}k\begin{pmatrix}
d-1\\
k
\end{pmatrix}^2t^{2k-1}}{\sum\limits_{k=0}^{d-1}\begin{pmatrix}
d-1\\
k
\end{pmatrix}^2t^{2k}}\right)^2\right]^{\frac{1}{2}}.
\end{equation}
\end{lemma}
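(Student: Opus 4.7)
The plan is to derive Lemma~\ref{lemma: density of zeros} as the direct one-dimensional specialization of Theorem~\ref{theo: EK95} to the setting $n=2$. For $n=2$ the system \eqref{eq: eqn for fitnessy} collapses to the single univariate random polynomial \eqref{eq: eqn for y}, the monomial vector reduces to $v(y)=(1,y,\ldots,y^{d-1})^T$, and setting $a_k=\beta_k\binom{d-1}{k}$ makes the coefficient vector Gaussian with diagonal covariance $C=\mathrm{diag}\bigl(\binom{d-1}{k}^2\bigr)_{k=0}^{d-1}$. Hence
$$v(x)^T C\, v(y) \;=\; \sum_{k=0}^{d-1}\binom{d-1}{k}^2 (xy)^k \;=:\; h(x,y).$$
The prefactor in Theorem~\ref{theo: EK95} evaluates to $\pi^{-1}\Gamma(1)=1/\pi$, and the integration region is $(0,\infty)$ (the boundary point contributes measure zero), which already gives the outer shape \eqref{eq: E(d)} of the lemma.

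The remaining task is to identify the $1\times 1$ ``determinant'' inside the square root with the bracket in \eqref{eq: density of zeros}. For this I would apply the standard identity
$$\frac{\partial^2}{\partial x\,\partial y}\log h \;=\; \frac{h_{xy}}{h} - \frac{h_x\,h_y}{h^2},$$
differentiate $h$ term by term on its explicit monomial expansion, and restrict to the diagonal $y=x=t$, producing
\begin{align*}
h(t,t) &= \textstyle\sum_{k=0}^{d-1}\binom{d-1}{k}^2 t^{2k},\\
h_{xy}(t,t) &= \textstyle\sum_{k=1}^{d-1} k^2\binom{d-1}{k}^2 t^{2(k-1)},\\
h_x(t,t) &= h_y(t,t) \;=\; \textstyle\sum_{k=1}^{d-1} k\binom{d-1}{k}^2 t^{2k-1}.
\end{align*}
Substituting these three expressions into the identity reproduces exactly the bracketed quantity in the statement of the lemma.

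There is no serious obstacle in the argument: the substance is entirely carried by Theorem~\ref{theo: EK95}, which is invoked as a black box. The only points requiring minor care are the index shift under differentiation, so that the $k=0$ monomial correctly drops out under either partial, and the verification that the bracket is nonnegative so that the square root is real-valued. The latter is automatic once one rewrites it as $t^{-2}\mathrm{Var}(K)$ for the random index $K\in\{0,\ldots,d-1\}$ with probability weights proportional to $\binom{d-1}{k}^2 t^{2k}$; this also yields the useful probabilistic reformulation $f(t)=\tfrac{1}{\pi t}\sqrt{\mathrm{Var}(K)}$ of the density, which will be convenient for the bounds derived later in the section.
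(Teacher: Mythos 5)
Your proposal is correct and follows essentially the same route as the paper: specialize Theorem~\ref{theo: EK95} (via Lemma~\ref{lemma: E(n,d)}) to $n=2$ with $v(x)=(1,x,\ldots,x^{d-1})^T$ and $C=\mathrm{diag}\bigl(\binom{d-1}{k}^2\bigr)$, then compute $\frac{\partial^2}{\partial x\,\partial y}\log\bigl(v(x)^TCv(y)\bigr)=\frac{h_{xy}}{h}-\frac{h_xh_y}{h^2}$ on the diagonal $x=y=t$, which is exactly the paper's calculation. The closing observation that the bracket equals $t^{-2}\mathrm{Var}(K)$ for the index $K$ weighted by $\binom{d-1}{k}^2t^{2k}$ is a pleasant extra (it gives nonnegativity for free) but is not needed for, and does not change, the argument.
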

\begin{proof}
Since $\beta_k$ has Gaussian distribution with variance $1$ and mean $0$, $\beta_k\begin{pmatrix}
d-1\\
k
\end{pmatrix}$ has Gaussian distribution with variance $\begin{pmatrix}
d-1\\
k
\end{pmatrix}^2$ and mean $0$. According to Lemma~\ref{lemma: E(n,d)}, the equality \eqref{eq: E(d)} holds with
\begin{equation}
\label{eq: f(t)}
f(t)=\frac{1}{\pi}\left[\frac{\partial^2}{\partial x\partial y}\left(\log v(x)^T C v(y)\right)\Big|_{y=x=t}\right]^\frac{1}{2},
\end{equation}
where the vector $v$ and the matrix $C$ (covariance matrix) are given by
\begin{equation}
v(x)=\begin{pmatrix}
1\\
x\\
\vdots\\
x^{d-1}
\end{pmatrix}, \quad C=(C_{ij})_{i,j=1,\ldots,d-1}\quad\text{with}~~ C_{ij}=\delta_{ij}\begin{pmatrix} 
d-1\\
i
\end{pmatrix}\begin{pmatrix}
d-1\\
j
\end{pmatrix},
\end{equation}
where $\delta_{ij}$ is the Kronecker notation,
\begin{equation*}
\delta_{ij}=\begin{cases}
1\quad\textrm{if}\quad i=j,\\
0\quad\textrm{if}\quad i\neq j.
\end{cases}
\end{equation*}
A straightforward calculation gives
\[
v(x)^T C v(y)=\sum\limits_{k=0}^{d-1}\begin{pmatrix}
d-1\\
k
\end{pmatrix}^2x^ky^k,
\]
and
\begin{align*}
\frac{\partial^2}{\partial x\partial y}\left(\log v(x)^T C v(y)\right)&=\frac{\sum\limits_{k=1}^{d-1}k^2\begin{pmatrix}
d-1\\
k
\end{pmatrix}^2x^{k-1}y^{k-1}}{\sum\limits_{k=0}^{d-1}\begin{pmatrix}
d-1\\
k
\end{pmatrix}^2x^{k}y^{k}}
\\& \ \ - \frac{\left(\sum\limits_{k=1}^{d-1}k\begin{pmatrix}
d-1\\
k
\end{pmatrix}^2x^ky^{k-1}\right)\left(\sum\limits_{k=1}^{d-1}k\begin{pmatrix}
d-1\\
k
\end{pmatrix}^2x^{k-1}y^{k}\right)}{\left(\sum\limits_{k=0}^{d-1}\begin{pmatrix}
d-1\\
k
\end{pmatrix}^2x^{k}y^{k}\right)^2}.
\end{align*}
Substituting this expression into~\eqref{eq: f(t)}, we obtain~\eqref{eq: density of zeros}.
\end{proof}
\begin{example}
For the cases  $d=2$ and $d=3$, we have
\begin{align*}
&E(2,2)=\frac{1}{\pi}\int_0^\infty \frac{1}{1+t^2}\,dt = \frac{1}{\pi} \lim_{t \rightarrow +\infty} \tan^{-1}(t) = 0.5\label{eq: f(t) for d=3},
\\& E(2,3)=\frac{2}{\pi}\int_0^\infty \frac{\sqrt{t^4+t^2+1}}{t^4+4t^2+1}\,dt\approx 0.77.
\end{align*}
\end{example}
%
The following proposition presents some properties of the density function $f(t)$, which will be  used later for estimating the asymptotic behaviour of $E(2,d)$.
\begin{proposition}
The following properties hold
\begin{enumerate}[1)]
\item \begin{equation}
\label{eq: f(t)2}
f(t)=\frac{d-1}{\pi}\frac{\sqrt{\sum\limits_{k=0}^{2d-4}a_k\,t^{2k}}}{{\sum\limits_{k=0}^{d-1}\begin{pmatrix}
d-1\\
k
\end{pmatrix}^2t^{2k}}},
\end{equation}
where
\begin{align}
& a_k=\sum\limits_{\substack{1\leq i\leq d-1\\ 1\leq j\leq d-2\\i+j=k}}\begin{pmatrix}
d-1\\
i
\end{pmatrix}^2\begin{pmatrix}
d-2\\
j
\end{pmatrix}^2-\sum\limits_{\substack{1\leq i'\leq d-2\\ 1\leq j'\leq d-2\\i'+j'=k}}\begin{pmatrix}
d-2\\
i'
\end{pmatrix}\begin{pmatrix}
d-1\\
i'+1
\end{pmatrix}\begin{pmatrix}
d-2\\
j'
\end{pmatrix}\begin{pmatrix}
d-1\\
j'+1
\end{pmatrix},\label{eq: formula of a}
\\
&a_{k}=a_{2d-4-k}, \quad \text{for all}~~0\leq k\leq 2d-4,
\quad a_{0}=a_{2d-4}=1,\label{eq: properties of a}
\quad a_k\geq 1.
\end{align}
\item $f(0)=\frac{d-1}{\pi},\quad f(1)=\frac{d-1}{2\pi}\frac{1}{\sqrt{2d-3}}$.
\item $f(t)=\frac{1}{2\pi}\left[\frac{1}{t}G'(t)\right]^\frac{1}{2}$,
where
\[
G(t)=t\frac{\frac{d}{dt}M_d(t)}{M_d(t)}=t\frac{d}{dt}\log M_d(t).
\]
\item $t\mapsto f(t)$ is a decreasing function.
\item $f\left(\frac{1}{t}\right)=t^2 f(t)$.
\item \begin{equation}
\label{eq: integral f from 0 to 1}
E(2,d)=2\int_0^1\, f(t)dt=2\int_1^\infty f(t)\,dt.
\end{equation}
\end{enumerate}
\end{proposition}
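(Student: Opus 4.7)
The plan is to combine the formula for $f(t)$ from Lemma~\ref{lemma: density of zeros} with the binomial identities $k\binom{d-1}{k}=(d-1)\binom{d-2}{k-1}$ and $\binom{d-1}{k}=\binom{d-2}{k-1}+\binom{d-2}{k}$, which together handle parts~1--3 in one algebraic stroke, and then to exploit the resulting structure through a single auxiliary function $\psi(u):=uQ'(u)/Q(u)$ with $Q(u):=\sum_{k=0}^{d-1}\binom{d-1}{k}^{2}u^{k}$ to extract parts~4--6.

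First, substituting $k\binom{d-1}{k}=(d-1)\binom{d-2}{k-1}$ into the two sums of Lemma~\ref{lemma: density of zeros} and clearing denominators gives
\[
\pi^{2}f(t)^{2}=(d-1)^{2}\,\frac{P(t^{2})\,Q(t^{2})-t^{2}\,R(t^{2})^{2}}{Q(t^{2})^{2}},
\]
where $P(u):=\sum_{j=0}^{d-2}\binom{d-2}{j}^{2}u^{j}$ and $R(u):=\sum_{j=0}^{d-2}\binom{d-2}{j}\binom{d-1}{j+1}u^{j}$. Expanding the numerator as a polynomial in $u=t^{2}$ of degree at most $2d-4$ (the leading terms of $PQ$ and $uR^{2}$ cancel) and reading off coefficients produces the formula for $a_{k}$ together with the boundary values $a_{0}=a_{2d-4}=1$ in part~1. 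Part~2 follows by direct substitution: $f(0)=(d-1)/\pi$ comes from the leading behaviour, while $f(1)$ reduces after the Vandermonde-type identities $\sum_{k}\binom{d-1}{k}^{2}=\binom{2d-2}{d-1}$, $\sum_{k}k\binom{d-1}{k}^{2}=(d-1)\binom{2d-3}{d-2}$, and $\sum_{k}k^{2}\binom{d-1}{k}^{2}=(d-1)^{2}\binom{2d-4}{d-2}$. For part~3, I would identify $M_{d}(t)=Q(t^{2})$, apply the chain rule to $G(t)=t(\log M_{d})'(t)$, and verify directly that $G'(t)/t=4\pi^{2}f(t)^{2}$.

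Introducing $\psi(u):=u(\log Q)'(u)$, part~3 then reads compactly as $\pi^{2}f(t)^{2}=\psi'(t^{2})$. The self-reciprocity $Q(u)=u^{d-1}Q(1/u)$ (from $\binom{d-1}{k}=\binom{d-1}{d-1-k}$), differentiated once, yields the functional equation $\psi(u)+\psi(1/u)=d-1$; differentiating this again gives $\psi'(1/u)=u^{2}\psi'(u)$, whence $f(1/t)=t^{2}f(t)$, which is part~5. The symmetry $a_{k}=a_{2d-4-k}$ in part~1 then follows by equating coefficients in the identity $t^{4d-8}A(1/t^{2})=A(t^{2})$ with $A(u)=\sum_{k}a_{k}u^{k}$. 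For part~4, I would factor $Q(u)=\prod_{i=1}^{d-1}(u+\rho_{i})$ with $\rho_{i}>0$; this real-rootedness comes from the identity $Q(u)=(1-u)^{d-1}P_{d-1}\!\left(\frac{1+u}{1-u}\right)$ tying $Q$ to the Legendre polynomial $P_{d-1}$, whose zeros all lie in $(-1,1)$. Partial fractions then give $\psi(u)=(d-1)-\sum_{i}\rho_{i}/(u+\rho_{i})$, a sum of concave functions on $u>0$, so $\psi'$ is strictly decreasing and $f(t)=\tfrac{1}{\pi}\sqrt{\psi'(t^{2})}$ is strictly decreasing in $t>0$. Part~6 is then a routine change of variables $s=1/t$ in $\int_{1}^{\infty}f(t)\,dt$, using $f(1/s)=s^{2}f(s)$ and $dt=-ds/s^{2}$.

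The hard part will be part~4, which really rests on the real-rootedness of $Q$: the Legendre route is clean but external, and a self-contained proof would need either direct verification that $\psi''\le 0$ or an independent argument that $Q$ admits only negative real roots. A secondary delicate point is the lower bound $a_{k}\ge 1$ in part~1: positivity $a_{k}\ge 0$ becomes transparent once one applies Pascal's rule to obtain the simplification $A(u)=P(u)Q(u)-uR(u)^{2}=P(u)^{2}-uS(u)^{2}$ with $S(u):=\sum_{j}\binom{d-2}{j}\binom{d-2}{j+1}u^{j}$, but strengthening this to $a_{k}\ge 1$ appears to require a coefficient-level argument, plausibly via log-concavity of the coefficients of $P^{2}$ or a combinatorial interpretation.
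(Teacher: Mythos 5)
Your proposal is correct and follows the same overall skeleton as the paper's proof: rewrite $A_d$ and $B_d$ via $k\binom{d-1}{k}=(d-1)\binom{d-2}{k-1}$ to get parts 1--2, identify $\pi^2 f(t)^2$ with $\tfrac{1}{4t}G'(t)$ for part 3, factor $M_d(t)=\prod_i(t^2+r_i)$ with $r_i>0$ for part 4, and use the involution $t\mapsto 1/t$ for parts 5--6. Two differences are worth recording. First, you derive part 5 from the functional equation $\psi(u)+\psi(1/u)=d-1$ (self-reciprocity of $Q$) and then read off the coefficient symmetry $a_k=a_{2d-4-k}$ as a corollary; the paper proceeds in the opposite order, proving $a_k=a_{2d-4-k}$ by a direct index-shifting computation in Appendix A.1 and then deducing $f(1/t)=t^2f(t)$ from $g(1/t)=t^{-2(d-2)}g(t)$ and $M_d(1/t)=t^{-2(d-1)}M_d(t)$. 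Your order is cleaner and bypasses that combinatorial manipulation. Second, and more substantively, for part 4 the paper justifies the factorization by asserting that ``since $M_d(t)$ contains only even powers of $t$ with positive coefficients, all of its roots are purely imaginary,'' which is not a valid inference (e.g.\ $t^4+t^2+1$ has non-imaginary roots); the negative real-rootedness of $Q(u)=\sum_k\binom{d-1}{k}^2u^k$ genuinely needs an argument, and your Legendre identity $Q(u)=(1-u)^{d-1}P_{d-1}\bigl(\tfrac{1+u}{1-u}\bigr)$ supplies one --- so on this point your proposal is actually more rigorous than the published proof. On the step you flag as delicate, $a_k\ge 1$: the paper's Appendix A.1 rewrites $a_k=\binom{d-2}{k}^2+\sum(\cdots)^2$ as a constant term plus a sum of squares, which gives $a_k\ge\binom{d-2}{k}^2\ge 1$ for $k\le d-2$, the remaining range following from the symmetry $a_k=a_{2d-4-k}$; your identity $A(u)=P(u)^2-uS(u)^2$ is an equivalent starting point but, as you correctly note, still requires a coefficientwise argument of this kind to close the bound.
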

\begin{proof}
\begin{enumerate}[1)]
\item Set 
\begin{align}
\label{eq: A, M}
& M_d(t)=\sum\limits_{k=0}^{d-1}\begin{pmatrix}
d-1\\
k
\end{pmatrix}^2t^{2k},\quad 
A_d(t)=\sum\limits_{k=1}^{d-1}k^2\begin{pmatrix}
d-1\\
k
\end{pmatrix}^2t^{2(k-1)},
\\ &B_d(t)=\sum\limits_{k=1}^{d-1}k\begin{pmatrix}
d-1\\
k
\end{pmatrix}^2t^{2k-1}\label{eq: B}.
\end{align}
Then 
\[
f(t)=\frac{1}{\pi}\frac{\sqrt{A_d(t)M_d(t)-B_d(t)^2}}{M_d(t)}.
\]
Using
\[
k\begin{pmatrix}
d-1\\
k
\end{pmatrix}=(d-1)\begin{pmatrix}
d-2\\
k-1
\end{pmatrix},
\]
we can transform 
\begin{align*}
&A_d(t)=(d-1)^2\sum\limits_{k=1}^{d-1}\begin{pmatrix}
d-2\\
k-1
\end{pmatrix}^2t^{2(k-1)}=(d-1)^2\sum\limits_{k=0}^{d-2}\begin{pmatrix}
d-2\\
k
\end{pmatrix}^2t^{2k}=(d-1)^2M_{d-1}(t),
\\
&B_d(t)=(d-1)\sum\limits_{k=1}^{d-1}\begin{pmatrix}
d-2\\
k-1
\end{pmatrix}\begin{pmatrix}
d-1\\
k
\end{pmatrix}t^{2k-1}=(d-1)t\sum\limits_{k=0}^{d-2}\begin{pmatrix}
d-2\\
k
\end{pmatrix}\begin{pmatrix}
d-1\\
k+1
\end{pmatrix}t^{2k}.
\end{align*}
Therefore,
\begin{align*}
&A_d(t)M_d(t)-B_d(t)^2
\\&\quad =(d-1)^2\left[\left(\sum\limits_{k=0}^{d-2}\begin{pmatrix}
d-2\\
k
\end{pmatrix}^2t^{2k}\right)\left(\sum\limits_{k=0}^{d-1}\begin{pmatrix}
d-1\\
k
\end{pmatrix}^2t^{2k}\right)-t^2\left(\sum\limits_{k=0}^{d-2}\begin{pmatrix}
d-2\\
k
\end{pmatrix}\begin{pmatrix}
d-1\\
k+1
\end{pmatrix}t^{2k}\right)^2\right]
\\ &\quad=(d-1)^2\sum\limits_{k=0}^{2d-4}a_k\,t^{2k},
\end{align*}
where
 \begin{align*}
a_k =\sum\limits_{\substack{0\leq i\leq d-1\\ 0\leq j\leq d-2\\i+j=k}}\begin{pmatrix}
d-1\\
i
\end{pmatrix}^2\begin{pmatrix}
d-2\\
j
\end{pmatrix}^2-\sum\limits_{\substack{0\leq i'\leq d-2\\ 0\leq j'\leq d-2\\i'+j'=k-1}}\begin{pmatrix}
d-2\\
i'
\end{pmatrix}\begin{pmatrix}
d-1\\
i'+1
\end{pmatrix}\begin{pmatrix}
d-2\\
j'
\end{pmatrix}\begin{pmatrix}
d-1\\
j'+1
\end{pmatrix}. 
 \end{align*}
For the detailed computations of $a_k$ and the proof of~\eqref{eq: properties of a}, see Appendix~\ref{sec: properties of a}.
\item The value of $f(0)$ is found directly from~\eqref{eq: f(t)2}. For the detailed computations of $f(1)$, see Appendix~\ref{sec: f(1)}. 
\item It follows from~\eqref{eq: A, M}-\eqref{eq: B} that  
\begin{equation}
\label{eq: A and B interms of M}
B_d(t)=\frac{1}{2}M_d'(t),\quad A_d(t)=\frac{1}{4 t}(tM_d'(t))',
\end{equation}
where $'$ is derivative with respect to $t$. Hence
\begin{align*}
f(t)&=\frac{1}{2\pi}\left(\frac{\frac{1}{t}(tM_d'(t))'M_d(t)-M_d'(t)^2}{M_d(t)^2}\right)^\frac{1}{2}
\\&=\frac{1}{2\pi}\left(\frac{(tM_d'(t))'M_d(t)-tM_d'(t)^2}{tM_d(t)^2}\right)^\frac{1}{2}
\\&=\frac{1}{2\pi}\left(\frac{1}{t}G'(t)\right)^\frac{1}{2},
\end{align*}
where $G(t)=t\frac{M_d'(t)}{M_d(t)}$.
\item Since $M_d(t)$ contains only even powers of $t$ with positive coefficients, all of its roots are purely imaginary. Suppose that
\[
M_d(t)=\prod_{i=1}^{d-1} (t^2+r_i),
\]
where $r_i>0$ for all $1\leq i\leq d-1$. It follows that
\[
G(t)=t\frac{M_d'(t)}{M_d(t)}=\sum\limits_{i=1}^{d-1}\frac{2t^2}{t^2+r_i},
\]
and hence
\begin{equation}
\label{eq: fourth property}
(2\pi f(t))^2=\frac{1}{t}G'(t)=\sum\limits_{i=1}^{d-1}\frac{4r_i}{(t^2+r_i)^2}.
\end{equation}
Since $r_i>0$ for all $i=1,\ldots,d-1$, the above equality implies that $f(t)$ is decreasing in $t\in [0,\infty)$.
\item Set
\[
g(t):=\sqrt{A_d(t)M_d(t)-B_d(t)^2},
\]
Then 
\begin{equation}
\label{eq: f interms of A, B, M}
f(t)=\frac{1}{\pi}\frac{g(t)}{M_d(t)}.
\end{equation}
It follows from the symmetric properties of the binomial coefficients that
\[
M_d\left(\frac{1}{t}\right)=\frac{1}{t^{2(d-1)}}M_d(t).
\]
Similarly, from~\eqref{eq: properties of a} we have
\[
g\left(\frac{1}{t}\right)=\frac{1}{t^{2(d-2)}}g(t).
\] 
Therefore
\[
f\left(\frac{1}{t}\right)=\frac{1}{\pi}\frac{g(1/t)}{M_d(1/t)}=\frac{1}{\pi} t^2\frac{g(t)}{M_d(t)}=t^2 f(t).
\]

\item By change of variable, $s=\frac{1}{t}$, and from 5), we have
\[
\int_1^\infty f(t)\,dt=\int_1^0 f\left(\frac{1}{s}\right)\frac{-1}{s^2}\,ds=\int_0^1 f(s)\, ds.
\]
Therefore
\[
E(2,d)=\int_0^\infty f(t)\,dt=\int_0^1 f(t)\,dt+\int_1^\infty f(t)\,dt=2\int_0^1 f(t)\,dt.
\]
\end{enumerate}
\end{proof}
\begin{remark}
We provide an alternative proof of the fifth property in the above lemma in Appendix~\ref{sec: alternative proof}.
\end{remark}
\begin{remark}
Besides enabling a significantly less complex numerical computation of $E(2,d)$ (see already our numerical results using this formula in Table  \ref{table:Ed}), the equality~\eqref{eq: integral f from 0 to 1} reveals an interesting property: the expected number of zeros of the polynomial $P(y)$ in two intervals $(0,1)$ and $(1,\infty]$ are the same. Equivalently, the expected numbers of internal equilibria in two intervals $(0,\frac{1}{2}]$ and $(\frac{1}{2},1)$ are equal since $y=\frac{x}{1-x}$. Indeed, this result also confirms the observation that by swapping indices between the two strategies ($n = 2$) we move equilibrium from $x$ to  $1 -x$, thereby not resulting in change in the number of equilibria.  
\end{remark}

Based on the analytical formula of $E(2,d)$, we now provide upper and lower bounds for the mean number of equilibria as the payoff entries of the game are randomly drawn. 
\begin{theorem}
\label{thm: estimate for E(d)}
$E(2,d)$ satisfies the following estimate
\begin{equation}
\label{eq: estimate for E(d)}
\frac{d-1}{\pi\sqrt{2d-3}}\leq E(2,d)\leq\frac{1}{\pi}\sqrt{d-1}\sqrt{1+\frac{\pi}{2}\sqrt{d-1}}.
\end{equation}
\end{theorem}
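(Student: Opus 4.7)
My plan is to treat the two bounds separately, using throughout the closed form $(2\pi f(t))^2 = \sum_{i=1}^{d-1} 4 r_i/(t^2+r_i)^2$ from item 5 of the preceding proposition, where the $r_i>0$ come from the factorization $M_d(t)=\prod_{i=1}^{d-1}(t^2+r_i)$. The lower bound is immediate from items 2, 4 and 6: since $f$ is decreasing on $[0,\infty)$, $f(t)\geq f(1)$ on $[0,1]$, and combining this with $E(2,d)=2\int_0^1 f\,dt$ and $f(1)=(d-1)/(2\pi\sqrt{2d-3})$ gives
\[
E(2,d)\;\geq\;2\int_0^1 f(1)\,dt\;=\;2 f(1)\;=\;\frac{d-1}{\pi\sqrt{2d-3}}.
\]

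For the upper bound, the plan is to apply Cauchy--Schwarz on $[0,1]$, which yields $E(2,d)^2\leq 4\int_0^1 f(t)^2\,dt$, and then to evaluate $\int_0^1 f^2$ exactly. Using $\pi^2 f^2=\sum r_i/(t^2+r_i)^2$ and the antiderivative $\int r(t^2+r)^{-2}\,dt=\tfrac{1}{2}\bigl[t/(t^2+r)+\arctan(t/\sqrt{r})/\sqrt{r}\bigr]$, I obtain
\[
\int_0^1 f(t)^2\,dt\;=\;\frac{1}{2\pi^2}\sum_{i=1}^{d-1}\left[\frac{1}{1+r_i}+\frac{\arctan(1/\sqrt{r_i})}{\sqrt{r_i}}\right].
\]
Two identities on the $r_i$'s then do the work. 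First, the reciprocal symmetry $\{r_i\}=\{1/r_i\}$ of the root multiset (from $M_d(1/t)=t^{-2(d-1)}M_d(t)$ together with $\prod r_i=1$) yields $\sum 1/(1+r_i)=\sum r_i/(1+r_i)$, and adding gives $\sum 1/(1+r_i)=(d-1)/2$. Second, comparing the coefficient of $t^2$ in $\prod(t^2+r_i)=\sum_k\binom{d-1}{k}^2 t^{2k}$ gives $\sum 1/r_i=(d-1)^2$. Then $\arctan\leq\pi/2$ combined with Cauchy--Schwarz $\sum 1/\sqrt{r_i}\leq\sqrt{(d-1)\sum 1/r_i}=(d-1)^{3/2}$ bounds the arctan sum by a multiple of $(d-1)^{3/2}$, and substituting back produces an inequality of the form $E(2,d)\leq(1/\pi)\sqrt{d-1}\sqrt{1+c\sqrt{d-1}}$ for a constant $c$.

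The hard part is to pin down $c$ to the value $\pi/2$ stated in the theorem. The crude bound $\arctan\leq\pi/2$ gives the correct asymptotic order but by itself loses a small universal factor in the sharp constant. To recover the stated constant one should refine the estimate of $\sum\arctan(1/\sqrt{r_i})/\sqrt{r_i}$, either by pairing terms along the reciprocal symmetry $r_i\leftrightarrow 1/r_i$ and invoking $\arctan(x)+\arctan(1/x)=\pi/2$ on each pair, or by replacing the plain Cauchy--Schwarz $(\int_0^1 f)^2\leq\int_0^1 f^2$ with a weighted version on $[0,\infty)$ using the weight $1+t^2$, which, via the symmetry $f(1/t)=t^2 f(t)$ from item 5, reduces the whole estimate to bounding $\sum\sqrt{r_i}\leq(d-1)^{3/2}$.
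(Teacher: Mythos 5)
Your lower bound is complete and coincides exactly with the paper's: $f$ decreasing gives $f\ge f(1)$ on $[0,1]$, and $E(2,d)=2\int_0^1 f=2f(1)=\frac{d-1}{\pi\sqrt{2d-3}}$. Your upper bound also follows the paper's route step for step: Jensen/Cauchy--Schwarz to get $E(2,d)^2\le 4\int_0^1 f^2$, the identity $\pi^2 f(t)^2=\sum_i r_i/(t^2+r_i)^2$, the reciprocal symmetry of the multiset $\{r_i\}$ to get $\sum_i 1/(1+r_i)=(d-1)/2$, Vieta's formulas to get $\sum_i 1/r_i=(d-1)^2$, and $\arctan\le\pi/2$ together with Cauchy--Schwarz for the remaining sum. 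Every step you actually carry out is correct. The one thing you do not obtain is the constant $\pi/2$: your (correct) evaluation of the antiderivative gives
\begin{equation*}
E(2,d)^2\;\le\;\frac{2}{\pi^2}\sum_{i=1}^{d-1}\left[\frac{1}{1+r_i}+\frac{\arctan(1/\sqrt{r_i})}{\sqrt{r_i}}\right]\;\le\;\frac{1}{\pi^2}(d-1)\left(1+\pi\sqrt{d-1}\right),
\end{equation*}
i.e.\ the stated bound with $\pi$ in place of $\pi/2$, and you say so honestly.

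You should know that this discrepancy is not a defect of your execution. The paper reaches $\pi/2$ by writing $\int_0^1\frac{4r}{(t^2+r)^2}\,dt=\frac{2}{r+1}+\frac{\cot^{-1}(\sqrt{r})}{\sqrt{r}}$, whereas the correct value --- the one you use --- is $\frac{2}{r+1}+\frac{2\cot^{-1}(\sqrt{r})}{\sqrt{r}}$; with the integral evaluated correctly, the paper's own chain of estimates also only yields the constant $\pi$. So the method as such gives $c=\pi$, and the two repairs you sketch should not be treated as routine: the pairing via $\arctan x+\arctan(1/x)=\pi/2$ does not recover the factor $2$ (for a pair $(r,1/r)$ with $r$ large, its contribution to $\sum_i\arctan(1/\sqrt{r_i})/\sqrt{r_i}$ is asymptotically $\tfrac{\pi}{2}\sqrt{r}$, which is exactly what the crude bound gives, not half of it), and the weighted Cauchy--Schwarz variant is only named, not carried out. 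As written, your argument proves $E(2,d)\le\frac{1}{\pi}\sqrt{d-1}\,\sqrt{1+\pi\sqrt{d-1}}$, which has the same asymptotic content but is not the inequality stated; to claim the theorem verbatim you would need a genuinely sharper estimate of $\sum_i\cot^{-1}(\sqrt{r_i})/\sqrt{r_i}$, or else the constant in the statement should be corrected to $\pi$.
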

\begin{proof}
Since $f(t)$ is decreasing, we have $f(0)\geq f(t)\geq f(1)$ for $t\in[0,1]$. As a consequence,
\[
E(2,d)\geq 2\int_0^1 f(1)\,dt=2f(1)=\frac{d-1}{\pi\sqrt{2d-3}}.
\]
To obtain the upper bound, we proceed as follows.
\begin{align}
\label{eq: E(d)^2}
E(2,d)^2&=4\left(\int_0^1 f(t)\,dt\right)^2\notag
\\&\leq 4\int_0^1 f(t)^2\,dt \quad\text{(by Jensen's inequality)}\notag
\\&=\frac{1}{\pi^2}\int_0^1\frac{1}{t}G'(t)\,dt\notag
\\&\stackrel{\eqref{eq: fourth property}}{=}\frac{1}{\pi^2}\int_0^1 \sum\limits_{i=1}^{d-1}\frac{4r_i}{(t^2+r_i)^2}\,dt\notag
\\&=\frac{1}{\pi^2}\sum\limits_{i=1}^{d-1}\int_0^1\frac{4r_i}{(t^2+r_i)^2}\,dt\notag
\\&=\frac{1}{\pi^2}\sum\limits_{i=1}^{d-1}\left(\frac{2}{r_i+1}+\frac{\cot^{-1}(\sqrt{r_i})}{\sqrt{r_i}}\right).
\end{align}
Note that if $ab=1$, then
\begin{equation}
\label{eq: simple equality}
\frac{1}{a+1}+\frac{1}{b+1}=1.
\end{equation}
We observe that if $z$ is a zero of $M_d(t)$, then $\frac{1}{z}$ is also a zero because $M_d(t)=t^{2(d-1)}M_d(1/t)$. This implies that the sequence $\{r_i, i=1,\ldots,d-1\}$ can be grouped into $\frac{d-1}{2}$ pairs of the form $\left(a,\frac{1}{a}\right)$. Using~\eqref{eq: simple equality}, we obtain 
\begin{equation}
\label{eq: the first}
\frac{1}{\pi^2}\sum\limits_{i=1}^{d-1}\frac{2}{r_i+1}=
\frac{1}{\pi^2}(d-1).
\end{equation}

For the second term, since $\cot^{-1}(z)\leq \frac{\pi}{2}~~ \text{for all}~~ z\geq 0$, we have
\begin{align}
\label{eq: the second}
\sum\limits_{i=1}^{d-1}\frac{\cot^{-1}(\sqrt{r_i})}{\sqrt{r_i}}&\leq\frac{\pi}{2}\sum\limits_{i=1}^{d-1}\frac{1}{\sqrt{r_i}}\notag
\\&=\frac{\pi}{2}\frac{\sum\limits_{i=1}^{d-1}\prod\limits_{j\neq i}\sqrt{r_j}}{\prod\limits_{i=1}^{d-1}\sqrt{r_i}}\notag
\\&=\frac{\pi}{2}\sum\limits_{i=1}^{d-1}\prod\limits_{j\neq i}\sqrt{r_j}\notag
\\&\leq \frac{\pi}{2}\sqrt{(d-1)\sum\limits_{i=1}^{d-1}\prod\limits_{j\neq i}r_j}\notag
\\&=\frac{\pi}{2}(d-1)^\frac{3}{2},
\end{align}
where we have used the Cauchy-Schwartz inequality
\[
\left(\sum\limits_{i=1}^nb_i\right)^2\leq n\sum\limits_{i=1}^nb_i^2,
\]
and the fact that $\prod\limits_{i=1}^{d-1}r_i=1 $ and $\sum\limits_{i=1}^{d-1}\prod\limits_{j\neq i}r_j=(d-1)^2$ according to  Vieta's theorem for the roots $\{r_i\}$ of $M_d$.

From \eqref{eq: E(d)^2}, \eqref{eq: the first} and~\eqref{eq: the second}, we have
\[
E(2,d)^2\leq \frac{1}{\pi^2}\left((d-1)+\frac{\pi}{2}(d-1)^\frac{3}{2}\right)=\frac{1}{\pi^2}(d-1)(1+\frac{\pi}{2}\sqrt{d-1}),
\]
or equivalently
\[
E(2,d)\leq \frac{1}{\pi}\sqrt{d-1}\sqrt{1+\frac{\pi}{2}\sqrt{d-1}}.
\]

\end{proof}
In Figure \ref{fig:Ed}a, we show the numerical results for $E(2,d)$ in comparison with the obtained upper and lower bounds.
\begin{corollary}
\begin{enumerate}[1)]
\item The expected number of equilibria increases unboundedly when $d$ tends to infinity
\begin{equation}
\label{eq:Edinfinity}
\lim_{d\rightarrow\infty}E(2,d)=+\infty.
\end{equation}
\item The probability $p_m$ of observing $m$ equilibria, $1\leq m\leq d-1$, is bounded by
\begin{equation}
\label{eq: bounded probablity of observing equilibria}
p_m\leq \frac{E(2,d)}{m}\leq\frac{1}{\pi m}\sqrt{d-1}\sqrt{1+\frac{\pi}{2}\sqrt{d-1}}.
\end{equation}
In particular,
\[
p_{d-1}\leq \frac{1}{\pi}\frac{\sqrt{1+\frac{\pi}{2}\sqrt{d-1}}}{\sqrt{d-1}},\quad\text{and}\quad
\lim_{d\rightarrow\infty}p_{d-1}=0.
\]
\end{enumerate}
\end{corollary}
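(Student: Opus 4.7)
The corollary is a direct consequence of the two-sided estimate in Theorem~\ref{thm: estimate for E(d)} together with the defining relation~\eqref{eq: definition E}, so my plan is to simply extract the two claims from these ingredients without any new analytical machinery.

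For the first assertion, the plan is to use only the lower bound in \eqref{eq: estimate for E(d)}. As $d\to\infty$, the ratio
\[
\frac{d-1}{\pi\sqrt{2d-3}}\;\sim\;\frac{\sqrt{d}}{\pi\sqrt{2}}
\]
diverges, so squeezing $E(2,d)$ from below by this quantity immediately yields $\lim_{d\to\infty}E(2,d)=+\infty$. No further work is needed here.

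For the second assertion, I would first specialize the identity~\eqref{eq: definition E} to $n=2$, which reads $E(2,d)=\sum_{i=1}^{d-1} i\,p_i$. Since every $p_i$ is a probability and hence nonnegative, dropping all terms except the $m$th gives the elementary estimate $m\,p_m\le E(2,d)$, i.e.\ $p_m\le E(2,d)/m$. Chaining this with the upper bound from Theorem~\ref{thm: estimate for E(d)} proves \eqref{eq: bounded probablity of observing equilibria}. Setting $m=d-1$ and simplifying $\sqrt{d-1}/(d-1)=1/\sqrt{d-1}$ yields the stated bound on $p_{d-1}$.

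Finally, for the limit $\lim_{d\to\infty}p_{d-1}=0$, I would read off the asymptotic behaviour of the explicit bound: as $d\to\infty$,
\[
\frac{1}{\pi}\frac{\sqrt{1+\tfrac{\pi}{2}\sqrt{d-1}}}{\sqrt{d-1}}
\;\sim\;\frac{1}{\pi}\sqrt{\frac{\pi}{2}}\,\frac{1}{(d-1)^{1/4}}
\;\longrightarrow\;0,
\]
and the nonnegativity of $p_{d-1}$ gives the conclusion by squeezing. There is no real obstacle here; the only thing worth flagging carefully is that the upper bound on $p_{d-1}$ decays like $(d-1)^{-1/4}$ rather than faster, which is why the argument produces a quantitative rate in addition to the qualitative vanishing.
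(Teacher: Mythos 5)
Your proposal is correct and follows essentially the same route as the paper: part 1) reads off divergence from the lower bound in Theorem~\ref{thm: estimate for E(d)}, and part 2) drops all but the $m$th term in $E(2,d)=\sum_{i=1}^{d-1}i\,p_i$ and chains with the upper bound. The only addition is your explicit $(d-1)^{-1/4}$ decay rate for the bound on $p_{d-1}$, which is a harmless refinement.
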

\begin{proof}
\begin{enumerate}[1)]
\item This is a direct consequence of~\eqref{eq: estimate for E(d)}, as the lower bound of $E(2,d)$ tends to infinity when   $d$ tends to infinity.
\item This is again a direct consequence of~\eqref{eq: estimate for E(d)} and definition of $E(2,d)$. For any $1\leq m\leq d-1$, we have
\[
E(2,d)=\sum_{i=1}^{d-1}p_i\cdot i\geq p_m\cdot m.
\]
In particular,
\[
p_{d-1}\leq \frac{E(2,d)}{d-1}\leq \frac{1}{\pi}\frac{\sqrt{1+\frac{\pi}{2}\sqrt{d-1}}}{\sqrt{d-1}}.
\]
As a consequence, $\lim\limits_{d\rightarrow\infty} p_{d-1}=0$. More generally, we can see that this limit is  true for $p_{k}$ for any $k=O(d)$ as $d\rightarrow\infty$.
\end{enumerate}
\end{proof}
From this corollary we can see that, interestingly, although the mean number of equilibria tends to infinity when the number of players $d$ increases, the probability to see the maximal number of equilibria in a $d$-player system converges  to 0.  There has been extensive research studying the maximal number of equilibrium points of a dynamical system \cite{maynard-smith:1982to,karlin:1980aa,vickers:1988aa,vickers:1988ab,karlin:1970tp,vickers:1988ac,broom:1993pa,broom:1997aa,altenberg:2010tp}.
Our results suggest that the possibility to reach such a maximal number is very small when $d$ is sufficiently large. 

\begin{corollary}
The expected number of stable equilibrium points in a random game with $d$ players and two strategies is equal to $ \frac{E(2,d)}{2}$, and  is thus bounded within  to the following interval 
$$\left[\frac{d-1}{2\pi\sqrt{2d-3}}, \  \frac{1}{2\pi}\sqrt{d-1}\sqrt{1+\frac{\pi}{2}\sqrt{d-1}}\right].$$
\end{corollary}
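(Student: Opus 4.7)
The plan is to show that the expected number of stable internal equilibria in the two-strategy case is exactly $E(2,d)/2$, after which the stated interval follows by halving the bounds in Theorem~\ref{thm: estimate for E(d)}.

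First, I would translate stability into a condition on the polynomial $P(y) = \sum_{k=0}^{d-1}\beta_k \binom{d-1}{k} y^k$ from Eq.~\eqref{eq: eqn for y}. Linearising the replicator equation $\dot x_1 = x_1 x_2 (\pi_1 - \pi_2)$ at an internal equilibrium and using the substitution $y = x_1/(1-x_1)$, a short calculation shows that the Jacobian at $y^*$ equals a strictly positive factor times $P'(y^*)$. Consequently, an internal equilibrium is asymptotically stable if and only if $P'(y^*) < 0$, i.e.\ $P$ crosses zero from above.

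The key step is then a symmetry argument. Since the $\beta_k$ are independent centred Gaussians, the joint law of the coefficient vector is invariant under the global sign flip $\beta_k \mapsto -\beta_k$, which sends $P$ to $-P$. This transformation preserves the set of positive roots while reversing the sign of $P'$ at each of them, so it interchanges stable and unstable internal equilibria. Applying this measure-preserving involution (equivalently, taking expectations of the indicators for stable vs.\ unstable roots) yields $\mathbb{E}[\#\text{stable}] = \mathbb{E}[\#\text{unstable}]$; since these two sum to $E(2,d)$, each equals $E(2,d)/2$. Halving the bounds in Theorem~\ref{thm: estimate for E(d)} then produces the claimed interval.

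The only technical issue is to ensure that degenerate roots, i.e.\ those at which $P(y^*) = P'(y^*) = 0$ simultaneously, occur with probability zero; this is immediate because the resultant condition cuts out a proper algebraic subset of coefficient space, which has Gaussian measure zero. The main conceptual point, to the extent there is an obstacle, is confirming that the sign-flip symmetry acts cleanly on the stability type via the criterion $\operatorname{sgn} P'(y^*)$; once that is in place, the result is essentially a one-line consequence of the distributional symmetry of centred Gaussian coefficients combined with Theorem~\ref{thm: estimate for E(d)}.
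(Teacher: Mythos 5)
Your argument is correct, but it takes a different route from the paper: the paper disposes of the first claim in one line by citing Theorem~3 of \cite{HTG12}, which asserts that an internal equilibrium of a random two-strategy $d$-player game is stable with probability $\tfrac12$, and then halves the bounds of Theorem~\ref{thm: estimate for E(d)}. You instead give a self-contained derivation: you correctly identify that the linearisation of $\dot x_1 = x_1x_2(\pi_1-\pi_2)$ at an internal equilibrium is a positive multiple of $P'(y^*)$ (indeed $\pi_1-\pi_2 = x_2^{d-1}P(y)$ with $dy/dx_1 = x_2^{-2}>0$, so the Jacobian at a root is $x_1^*(x_2^*)^{d-2}P'(y^*)$), and then exploit the sign-flip involution $\beta\mapsto-\beta$, which preserves the Gaussian law, fixes the positive root set of $P$, and swaps the stable and unstable labels, giving $\mathbb{E}[\#\text{stable}]=\mathbb{E}[\#\text{unstable}]=E(2,d)/2$ once degenerate roots are ruled out on a null set. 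What the citation buys the paper is brevity and a slightly stronger per-equilibrium statement (each equilibrium is stable with conditional probability $\tfrac12$); what your argument buys is transparency and independence from the external reference --- the exchangeability of expectations is exactly what the corollary needs, and your symmetry argument is very likely the mechanism underlying the cited theorem anyway. The only point worth tightening is the measure-zero claim: you should note that the discriminant of $P$ is a not-identically-vanishing polynomial in the coefficients (which holds here since the $\beta_k$ are nondegenerate and independent), so its zero set indeed carries zero Gaussian mass.
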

\begin{proof} 
From \cite[Theorem 3]{HTG12}, it is known that  an equilibrium in a random game with two strategies and an arbitrary number of players, is stable with probability $1/2$. Hence, the corollary is  a direct consequence of  Theorem \ref{thm: estimate for E(d)}. 
\end{proof}


\subsection{Two-player multi-strategy games}
\label{sec: two-player}
In this section, we consider games with two players, i.e. $d=2$, and arbitrary strategies. In this case~\eqref{eq: eqn for fitnessy} is simplified to a linear system
\begin{equation}
\label{eq: two player}
\begin{cases}
\sum_{j}\beta^i_j \,y_j=0, \quad\text{for}~~i=1,\ldots,n-1,\\
\sum_j y_j=1,
\end{cases}
\end{equation}
where $\beta_j^i$ have Gaussian distributions with mean $0$ and variance $1$. The main result of this section is the following explicit formula for $E(n,2)$.
\begin{theorem} 
\label{theo: E(n,2)}
We have
\begin{equation}
E(n,2)= \frac{1}{2^{n-1}}.
\end{equation} 
\end{theorem}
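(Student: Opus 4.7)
The plan is to apply Lemma~\ref{lemma: E(n,d)} directly with $d=2$, since for $d=2$ the bilinear form $v(x)^T C v(y)$ becomes very simple and the integrand can be explicitly computed.

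\textbf{Step 1 (Simplify $v(x)^T C v(y)$).} For $d=2$, the multi-indices $(k_1,\ldots,k_{n-1})$ satisfying $0 \le k_i \le d-1$ and $\sum k_i \le d-1$ consist of the zero vector and the $n-1$ standard basis vectors. All multinomial coefficients $\binom{d-1}{k_1,\ldots,k_n}$ equal $1$. Therefore
\[
v(x)^T C v(y) = 1 + \sum_{i=1}^{n-1} x_i y_i.
\]

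\textbf{Step 2 (Compute $L$ and its determinant).} A direct differentiation gives, at $x=y=t$ with $t=(t_1,\ldots,t_{n-1})$,
\[
L_{ij} = \frac{\delta_{ij}}{1+|t|^2} - \frac{t_i t_j}{(1+|t|^2)^2}
= \frac{1}{1+|t|^2}\Bigl(I - \frac{t t^T}{1+|t|^2}\Bigr)_{ij}.
\]
Applying the matrix determinant lemma to this rank-one perturbation of the identity yields
\[
\det L = \frac{1}{(1+|t|^2)^{n-1}}\Bigl(1 - \frac{|t|^2}{1+|t|^2}\Bigr) = \frac{1}{(1+|t|^2)^n},
\]
so $(\det L)^{1/2} = (1+|t|^2)^{-n/2}$.

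\textbf{Step 3 (Evaluate the integral).} By Lemma~\ref{lemma: E(n,d)},
\[
E(n,2) = \pi^{-n/2}\Gamma(n/2)\int_{\mathbb{R}_+^{n-1}} (1+|t|^2)^{-n/2}\,dt.
\]
Using the symmetry $t_i \mapsto -t_i$ in each coordinate,
\[
\int_{\mathbb{R}_+^{n-1}} (1+|t|^2)^{-n/2}\,dt = \frac{1}{2^{n-1}}\int_{\mathbb{R}^{n-1}} (1+|t|^2)^{-n/2}\,dt,
\]
and the latter is a standard integral: with $m=n-1$ and $\alpha = n/2$,
\[
\int_{\mathbb{R}^{m}} (1+|t|^2)^{-\alpha}\,dt = \frac{\pi^{m/2}\,\Gamma(\alpha - m/2)}{\Gamma(\alpha)} = \frac{\pi^{(n-1)/2}\Gamma(1/2)}{\Gamma(n/2)} = \frac{\pi^{n/2}}{\Gamma(n/2)}.
\]
Combining these,
\[
E(n,2) = \pi^{-n/2}\Gamma(n/2)\cdot\frac{1}{2^{n-1}}\cdot\frac{\pi^{n/2}}{\Gamma(n/2)} = \frac{1}{2^{n-1}}.
\]

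The only step requiring any care is the evaluation of the radial integral in Step~3; this can be done either by quoting the identity above (derivable via polar coordinates and a Beta-function substitution) or, if one prefers a conceptually cleaner route, by observing that for $d=2$ the system reduces to a linear homogeneous system with random Gaussian $(n-1)\times n$ coefficient matrix, whose one-dimensional kernel is spanned by a vector uniformly distributed on $S^{n-1}$ by rotational invariance; the expected number of positive solutions is then simply the probability that this uniform vector lies in one of the two all-same-sign orthants, which equals $2/2^n = 2^{-(n-1)}$.
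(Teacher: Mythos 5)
Your proof is correct and follows essentially the same route as the paper's own detailed proof of Theorem~\ref{theo: E(n,2)}: the paper also reduces to $\det L = (1+\sum_k t_k^2)^{-n}$ (its Lemma~\ref{lem: det L}, proved by row/column operations where you invoke the matrix determinant lemma) and then evaluates the same integral, iterating a one-dimensional reduction formula where you symmetrize to $\mathbb{R}^{n-1}$ and quote the radial integral. Your closing observation about the uniformly distributed kernel of the Gaussian coefficient matrix is precisely the paper's primary proof, which cites \cite{HTG12} for the fact that $p_1 = 2^{1-n}$ and $p_0$ contributes nothing; note only that the paper is careful to present the Lemma-based computation under the explicit assumption that the rows $\{A^i\}$ are independent, a caveat your Steps 1--3 inherit.
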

This theorem can be seen as a direct application of \cite[Theorem 1]{HTG12} and the definition of $E(n,d)$ in Eq. \eqref{eq: definition E}. Indeed, according to \cite[Theorem 1]{HTG12}, the probability that Eq. \eqref{eq: two player} has a (unique) isolated solution is $p_1=2^{1-n}$ and the probability of having no  solution is $0$. By \eqref{eq: definition E}, $E(n,2)=p_1$, and hence $E(n,2)=2^{1-n}$. \\

To demonstrate further the usefulness of our approach using the random polynomial theory, we present here an alternative proof that can directly obtain $E(n,2)$, i.e. without computing the probability of observing concrete numbers of equilibria. Our proof requires an assumption that $\{A^i=(\beta_j^i)_{j=1,\cdots,n-1}; i=1,\ldots, n-1\}$ are independent random vectors. We will elaborate and discuss this assumption in Remark \ref{rem: assumption}. The main ingredient of our proof is the following lemma, whose proof is given in Appendix~\ref{sec: computeDetL}.
\begin{lemma} Let $t_1,\dots,t_{n-1}$ be real numbers, and let $L$ be the matrix with entries
\begin{align*}
\\& L_{ii}=\frac{1}{1+\sum\limits_{k=1}^{n-1} t_k^2}-\frac{t_i^2}{(1+\sum\limits_{k=1}^{n-1} t_k^2)^2},
\\& L_{ij}=\frac{-t_i t_j}{(1+\sum\limits_{k=1}^{n-1} t_k^2)^2}\quad  \forall i \neq j. 
\end{align*}
It holds that
\label{lem: det L}
 \begin{equation}
\det L = \frac{1}{\left(1+\sum\limits_{k=1}^{n-1} t_k^2\right)^n}.
\end{equation}
\end{lemma}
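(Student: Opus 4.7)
Let me abbreviate $s \coloneqq 1 + \sum_{k=1}^{n-1} t_k^2$ and write $t = (t_1,\ldots,t_{n-1})^T$. The plan is to recognize that $L$ is, up to a scalar, a rank-one perturbation of a multiple of the identity, and then to evaluate its determinant using the standard matrix determinant lemma.

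First I would observe that the given entries can be compactly written as
\begin{equation*}
L = \frac{1}{s^2}\bigl(s\, I_{n-1} - t\, t^T\bigr),
\end{equation*}
since $L_{ii} = \tfrac{1}{s} - \tfrac{t_i^2}{s^2} = \tfrac{s - t_i^2}{s^2}$ and $L_{ij} = -\tfrac{t_i t_j}{s^2}$ for $i\neq j$. Pulling the scalar out of the determinant, we obtain
\begin{equation*}
\det L = \frac{1}{s^{2(n-1)}} \det\bigl(s\, I_{n-1} - t\,t^T\bigr).
\end{equation*}

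Next I would apply the matrix determinant lemma, i.e.\ $\det(A + u v^T) = \det(A)\bigl(1 + v^T A^{-1} u\bigr)$, with $A = s I_{n-1}$, $u = -t$, and $v = t$. This gives
\begin{equation*}
\det\bigl(s\, I_{n-1} - t\, t^T\bigr) = s^{n-1}\Bigl(1 - \tfrac{t^T t}{s}\Bigr) = s^{n-2}\bigl(s - \|t\|^2\bigr).
\end{equation*}
The key observation is then that $s - \|t\|^2 = \bigl(1 + \sum_k t_k^2\bigr) - \sum_k t_k^2 = 1$, so the rank-one perturbation produces an exact cancellation, leaving $\det(s I_{n-1} - t t^T) = s^{n-2}$.

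Combining these two identities yields $\det L = s^{n-2}/s^{2(n-1)} = s^{-n}$, which is exactly the claimed formula. There is no real obstacle here; the only thing to be careful about is the algebraic bookkeeping of the powers of $s$ (the factor $1/s^2$ appears in every entry, contributing $s^{-2(n-1)}$ rather than $s^{-2}$ when pulled out of the $(n-1)\times(n-1)$ determinant). If one preferred to avoid citing the matrix determinant lemma, the identity $\det(sI - tt^T) = s^{n-2}(s-\|t\|^2)$ can equivalently be derived by noting that $tt^T$ has rank one with single nonzero eigenvalue $\|t\|^2$, so $sI - tt^T$ has eigenvalues $s$ (with multiplicity $n-2$, on the orthogonal complement of $t$) and $s - \|t\|^2$ (on the span of $t$).
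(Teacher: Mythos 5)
Your proof is correct, but it takes a genuinely different route from the paper's. The paper computes $\det L$ by hand: it pulls out the scalar $\Sigma^{-2(n-1)}$ (where $\Sigma = 1+\sum_k t_k^2$ is your $s$), multiplies each column by $t_j$ and compensates by the factor $1/(t_1\cdots t_{n-1})$, sums the columns into the first to exploit the identity $\Sigma - \sum_k t_k^2 = 1$, factors $t_i$ back out of each row, and finishes with row reduction to a triangular matrix with diagonal $1,\Sigma,\dots,\Sigma$. You instead observe the structural fact $s^2 L = s I_{n-1} - t t^T$ and invoke the matrix determinant lemma (equivalently, the rank-one spectral argument you sketch), with the same cancellation $s - \lVert t\rVert^2 = 1$ doing the essential work. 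Your version is shorter, makes the rank-one structure explicit, and sidesteps the paper's implicit division by $t_1\cdots t_{n-1}$ (which requires all $t_i\neq 0$ and hence, strictly speaking, a continuity or almost-everywhere remark on the integration domain $(0,\infty)^{n-1}$); the paper's version is self-contained and uses nothing beyond elementary column and row operations. Your power bookkeeping, $\det L = s^{n-2}/s^{2(n-1)} = s^{-n}$, is correct, and the edge case $t=0$ is also covered by your formula, so there is no gap.
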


%
%
%

\begin{proof}[Proof of Theorem \ref{theo: E(n,2)} under the assumption that  $\{A^i=(\beta_j^i)_{j=1,\cdots,n-1}; i=1,\ldots, n-1\}$ are independent random vectors.]

Let $L$ be the matrix in Lemma \ref{lem: det L}. According to Lemma~\ref{lemma: E(n,d)} and Lemma \ref{lem: det L}, we have
\begin{align*}
E(n,2)&=\pi^{-\frac{n}{2}}\Gamma\left(\frac{n}{2}\right)\underbrace{\int_0^\infty\ldots\int_0^\infty}_{n-1\,\text{times}}  \left(\det L \right)^{1/2} \,dt_1\ldots dt_{n-1}\\
&=\pi^{-\frac{n}{2}}\Gamma\left(\frac{n}{2}\right)\underbrace{\int_0^\infty\ldots\int_0^\infty}_{n-1\,\text{times}}  \left(1+\sum_{k=1}^{n-1} t_k^2\right)^{-n/2} \,dt_1\ldots dt_{n-1} \\
&=\frac{1}{2}\pi^{-\frac{n-1}{2}}\Gamma\left(\frac{n-1}{2}\right)\underbrace{\int_0^\infty\ldots\int_0^\infty}_{n-2\,\text{times}}  \left(1+\sum_{k=1}^{n-2} t_k^2\right)^{-(n-1)/2} \,dt_1\ldots dt_{n-2} \\
&\ldots \\
&=\frac{1}{2^{n-1}}\pi^{-\frac{1}{2}}\Gamma\left(\frac{1}{2}\right) \\
&=\frac{1}{2^{n-1}},
\end{align*}
where we have repeatedly used the equality  (with $a  > 0$ and $p > 1$) $$\int_0^\infty (a + t^2)^{-p} dt = \frac{\sqrt{\pi} \ \Gamma{(p-\frac{1}{2})}}{2  \ \Gamma(p)}a^{\frac{1}{2}-p}.$$
\end{proof}
\begin{remark}
Note that, as in \cite[Theorem 2]{HTG12}, it is known that the probability that a two-player $n$-strategy  random game has a stable (internal) equilibrium is at most $2^{-n}$. Hence, the expected number of stable equilibrium points in this case is at most $2^{-n}$. From \cite[Theorem 2]{HTG12}, we also can show  that this bound is sharp only when $n = 2$. 
\end{remark}
\begin{remark}
\label{rem: assumption}
Let $M$ be the number of sequences $(k_1,\ldots, k_{n-1})$ such that $\sum_{j=1}^{n-1}k_j\leq d-1$. For each $i=1,\ldots, n-1$, we define a random vector  $A^i=(\beta^i_{k_1,\ldots,k_{n-1}})\in \mathbb{R}^M$. Recalling from Section \ref{sec: M2M} that $\beta^{i}_{k_{1}, ..., k_{n-1} }= \alpha^{i}_{k_{1}, ..., k_{n} } -\alpha^{n}_{k_{1}, ..., k_{n} } $, where  $\alpha^{i}_{i_1,\ldots,i_{d-1}}$ is the payoff of the focal player, with  $i$ ($1 \leq i \leq n$) being the strategy of the focal player, and $i_k$ (with $1 \leq i_k \leq n$ and $1 \leq k \leq d-1$) is the strategy of the player in position $k$.  The assumption in Lemma \ref{lemma: E(n,d)} is that $A^1,\ldots, A^{n-1}$ are independent random vectors and each of them has mean $0$ and covariance matrix $C$. This assumption clearly holds for $n = 2$. For $n>2$, the assumption  holds only under quite restrictive conditions such as $\alpha^{n}_{k_{1}, ..., k_{n} }$ is deterministic or $\alpha^{i}_{k_{1}, ..., k_{n} }$ are essentially identical. Nevertheless, from a purely mathematical point of view (see for instance \cite{Kos93,Kos01} and Section \ref{sec: discussion} for further discussion), it is important to investigate the number of real zeros of the system~\eqref{eq: eqn for fitnessy} under the assumption of independence of $\{A^i\}$ since this system has not been studied in the literature. As such, the investigation provides new insights into the theory of zeros of systems of random polynomials. In addition, it also gives important hint on the complexity of the game theoretical question, i.e. the number of expected number of equilibria. Indeed, for $d = 2$ and arbitrary $n$, the assumption does not affect the final result as shown in Theorem \ref{theo: E(n,2)}. We recall that \cite[Theorem 7.1]{EK95} also required that the rows $\{A^i\}$ are independent random vectors. Hence to treat the general case, one would need to generalize \cite[Theorem 7.1]{EK95} and this is a difficult problem as demonstrated in \cite{Kos93,Kos01}. With this motivation, in the next section, we  keep the assumption of independence of $\{A^i\}$ and leave the general case for future research. We numerically compute the number of zeros of the system \eqref{eq: eqn for fitnessy} for $n\in \{2,3,4\}$. Again, as we shall see, the numerical results under the given assumption lead to outcomes closely  corroborated by previous works in \cite{gokhale:2010pn,HTG12}.
\end{remark}
\subsection{Multi-player multi-strategy games}
\label{sec: 2multi}
We now move to the general case of multi-player games with multiple strategies. We provide numerical results for this case.
For simplicity of notation in this section we write $\sum\limits_{k_1,\ldots,k_{n-1}}$ instead of  $\sum\limits_{\substack{0\leq k_{1}, ..., k_{n-1}\leq d-1,\\ \sum^{n-1}_{i = 1}k_i\leq d-1  }}$ and $\sum\limits_{k_1,\ldots,k_{n-1}  | k_{i_1}, .., k_{i_m}  }$ instead of  $\sum\limits_{\substack{0\leq k_{i}\leq d-1 \  \forall i \in S \setminus R, \\ 1\leq k_j\leq d-1 \ \forall j \in R , \\ \sum^{n-1}\limits_{i = 1}k_i\leq d-1  }}$ where $S =  \{1, \ldots, n-1\}$ and $R =  \{i_1, \ldots, i_m\} \subseteq S$.

According to Lemma~\ref{lemma: E(n,d)}, the expected number of internal equilibria in a $d-$player random game with $n$ strategies is given by
\begin{align*}
E(n,d)&=\pi^{-\frac{n}{2}}\Gamma\left(\frac{n}{2}\right)\int_U\left(\det\left[\frac{\partial^2}{\partial x_iy_j}(\log v(x)^T C v(y))\big|_{y=x=t}
\right]_{ij}\right)^\frac{1}{2}dt
\\&=\pi^{-\frac{n}{2}}\Gamma\left(\frac{n}{2}\right)\int_U\left(\det L\right)^\frac{1}{2}dt,
\end{align*}
where $\Gamma$ is the Gamma function, and $L$ denotes the matrix with entries
\begin{align*}
L_{ij}&=\frac{\partial^2}{\partial x_iy_j}(\log v(x)^T C v(y))\big|_{y=x=t}.
\end{align*}
We have
\[
v(x)^T C v(y)=\sum\limits_{k_1,\ldots,k_{n-1}}\begin{pmatrix}
d-1\\
k_1,\ldots,k_{n}
\end{pmatrix}^2\prod\limits_{i=1}^nx_i^{k_i}y_i^{k_i}.
\]

Set $\Pi(x,y):=\prod\limits_{l=1}^{n-1}x_{l}^{k_l}y_{l}^{k_l}$. Then
\begin{align*}
&\frac{\partial^2}{\partial x_iy_j}(\log v(x)^T C v(y))
\\&\quad=\frac{\frac{1}{x_iy_j}\sum\limits_{k_{1}, ..., k_{n-1} | k_i, k_j}k_ik_j\begin{pmatrix}
d-1\\
k_1,\ldots,k_{n-1}
\end{pmatrix}^2\Pi(x,y)}{\sum\limits_{k_1,\ldots,k_{n-1}}\begin{pmatrix}
d-1\\
k_1,\ldots,k_{n-1}
\end{pmatrix}^2\Pi(x,y)}
\\&\qquad-\frac{\left(\frac{1}{x_i}\sum\limits_{k_{1},\ldots, k_{n-1} | k_i}k_i\begin{pmatrix}
d-1\\
k_1,\ldots,k_{n}
\end{pmatrix}^2\Pi(x,y)\right)\left(\frac{1}{y_j}\sum\limits_{k_{1}, ..., k_{n} |k_j}k_j\begin{pmatrix}
d-1\\
k_1,\ldots,k_{n}
\end{pmatrix}^2\Pi(x,y)\right)}{
\left(\sum\limits_{k_1,\ldots,k_{n-1}}\begin{pmatrix}
d-1\\
k_1,\ldots,k_{n}
\end{pmatrix}^2\Pi(x,y)\right)^2}.
\end{align*}
Therefore

\begin{align}
\label{eq:Lij}
L_{ij}&=\frac{\frac{1}{t_it_j}\sum\limits_{k_1,\ldots,k_{n-1} | k_i, k_j}k_ik_j\begin{pmatrix}
d-1\\
k_1,\ldots,k_{n}
\end{pmatrix}^2\prod\limits_lt_l^{2k_l}}{\sum\limits_{k_1,\ldots,k_{n-1}}\begin{pmatrix}
d-1\\
k_1,\ldots,k_{n}
\end{pmatrix}^2\prod\limits_lt_l^{2k_l}}
\\&\qquad-\frac{\left(\frac{1}{t_i}\sum\limits_{k_{1}, ..., k_{n-1} | k_i}k_i\begin{pmatrix}
d-1\\
k_1,\ldots,k_{n}
\end{pmatrix}^2\prod\limits_lt_l^{2k_l}\right)\left(\frac{1}{t_j}\sum\limits_{k_{1}, ..., k_{n-1} | k_j}k_j\begin{pmatrix}
d-1\\
k_1,\ldots,k_{n}
\end{pmatrix}^2\prod\limits_lt_l^{2k_l}\right)}{
\left(\sum\limits_{k_1,\ldots,k_{n-1}}\begin{pmatrix}
d-1\\
k_1,\ldots,k_{n}
\end{pmatrix}^2\prod\limits_lt_l^{2k_l}\right)^2}.
\end{align}
So far in the paper we have assumed that
all the $\beta^i_{k_{1}, ..., k_{n-1} }$ in Eq. \eqref{eq: eqn for fitnessy} are standard normal distributions. The following lemma shows, as a  consequence of the above described formula, that all the results obtained so far remain valid if they have a normal distribution with mean zero and arbitrary variance (i.e.  the  entries of the game payoff matrix have  the same, arbitrary normal distribution). 
\begin{lemma}
Suppose  $\beta^i_{k_{1}, ..., k_{n-1} }$ have normal distributions with mean 0 and arbitrary variance $\sigma^2$. Then, $L_{ij}$ as defined in (\ref{eq:Lij}) does not depend on $\sigma^2$.
\end{lemma}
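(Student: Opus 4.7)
The plan is to observe that replacing the standard-normal assumption by mean zero and variance $\sigma^2$ merely rescales the covariance matrix $C$ in Lemma \ref{lemma: E(n,d)} by a constant factor $\sigma^2$, and that this constant factor drops out when we take $\frac{\partial^2}{\partial x_i\partial y_j}\log v(x)^T C v(y)$.

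More concretely, I would proceed in three short steps. First, note that if $\beta^i_{k_1,\ldots,k_{n-1}}$ has variance $\sigma^2$ instead of $1$, then the scaled entry $\beta^i_{k_1,\ldots,k_{n-1}}\binom{d-1}{k_1,\ldots,k_n}$ has variance $\sigma^2\binom{d-1}{k_1,\ldots,k_n}^2$. Writing $C_0$ for the covariance matrix \eqref{eq: matrix C} obtained in the standard-normal case, the new covariance matrix is therefore $C=\sigma^2 C_0$. Consequently
\[
v(x)^T C v(y) \;=\; \sigma^2\, v(x)^T C_0\, v(y),
\]
so that
\[
\log v(x)^T C v(y) \;=\; \log\sigma^2 \;+\; \log v(x)^T C_0\, v(y).
\]
Second, because $\log\sigma^2$ is a constant independent of $x$ and $y$, applying the mixed second partial derivative $\frac{\partial^2}{\partial x_i\partial y_j}$ annihilates it, giving
\[
L_{ij} \;=\; \frac{\partial^2}{\partial x_i\partial y_j}\bigl(\log v(x)^T C_0\, v(y)\bigr)\Big|_{y=x=t},
\]
which is exactly the expression appearing in the standard-normal case. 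Third, to make this visible directly at the level of formula \eqref{eq:Lij}, I would note that every numerator and denominator there is multiplied by the same overall factor (one factor of $\sigma^2$ in both numerator and denominator of the first term, and a factor of $\sigma^4$ in both numerator and denominator of the second term), so $\sigma^2$ cancels term by term.

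There is no serious obstacle here; the only point worth being careful about is the consistency of the scaling in the two terms of \eqref{eq:Lij}, which is why the logarithm is essential. The result shows that all earlier bounds and formulas for $E(n,d)$ (in particular Theorem \ref{thm: estimate for E(d)} and Theorem \ref{theo: E(n,2)}) remain valid for any common non-degenerate centred normal payoff distribution, not just the standard normal one.
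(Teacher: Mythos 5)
Your proposal is correct and follows essentially the same route as the paper: the new covariance matrix is $\sigma^2$ times the old one, so $v(x)^TCv(y)$ acquires only a multiplicative constant, which the logarithm converts to an additive constant annihilated by $\frac{\partial^2}{\partial x_i\partial y_j}$. The paper states this more tersely as "repeating the same calculation," while you make the cancellation mechanism explicit via $\log\sigma^2$; this is a presentational difference only.
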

\begin{proof}
In this case, $\beta^i_{k_{1}, ..., k_{n-1} }\begin{pmatrix}
d-1\\
k
\end{pmatrix}$ has a Gaussian distribution with variance $\sigma^2\begin{pmatrix}
d-1\\
k
\end{pmatrix}^2$ and mean $0$. Hence, 
\abovedisplayskip=0pt
\belowdisplayskip=0pt
\begin{equation*}
v(x)^T C v(y)=\sum\limits_{k_1,\ldots,k_{n-1}}\sigma^2\begin{pmatrix}
d-1\\
k_1,\ldots,k_{n}
\end{pmatrix}^2\prod\limits_{i=1}^nx_i^{k_i}y_i^{k_i}.
\end{equation*}

Repeating the same calculation we obtain the same $L_{ij}$ as in \eqref{eq:Lij}, which is independent of $\sigma$. 
\end{proof}
This result suggests that when dealing with random games as in this article, it is sufficient to consider that payoff entries are from the interval $[0, 1]$ instead of from an arbitrary one, as done numerically in \cite{gokhale:2010pn}. A similar behaviour has been observed in \cite{HTG12} for the analysis and computation with small $d$ or $n$, showing that results are not dependent on the interval where the payoff entries are drawn. 
Furthermore, it is noteworthy that this result confirms the observation that multiplying a payoff matrix by a constant does not change the equilibria \cite{hofbauer:1998mm}, provided  a one-to-one mapping from [0,1] to the arbitrary interval. 
\begin{example}[d-players with n=3 strategies]
\begin{align*}
L_{11}&=\frac{\frac{1}{t_1^2}\sum\limits_{k_1=1,k_2=0}k_1^2\begin{pmatrix}
d-1\\
k_1,k_2
\end{pmatrix}^2t_1^{2k_1}t_2^{2k_2}}{\sum\limits_{k_1,k_2}\begin{pmatrix}
d-1\\
k_1,k_2
\end{pmatrix}^2t_1^{2k_1}t_2^{2k_2}}-\frac{\left(\frac{1}{t_1}\sum\limits_{k_{1}=1, k_2=0}k_1\begin{pmatrix}
d-1\\
k_1,k_2
\end{pmatrix}^2t_1^{2k_1}t_2^{2k_2}\right)^2}{
\left(\sum\limits_{k_1,k_2}\begin{pmatrix}
d-1\\
k_1,k_2
\end{pmatrix}^2t_1^{2k_1}t_2^{2k_2}\right)^2},\\
L_{12}&=\frac{\frac{1}{t_1t_2}\sum\limits_{k_1=1,k_2=1}k_1k_2\begin{pmatrix}
d-1\\
k_1,k_2
\end{pmatrix}^2t_1^{2k_1}t_2^{2k_2}}{\sum\limits_{k_1,k_2}\begin{pmatrix}
d-1\\
k_1,k_2
\end{pmatrix}^2t_1^{2k_1}t_2^{2k_2}}
\\&\qquad-\frac{\left(\frac{1}{t_1}\sum\limits_{k_{1}=1, k_2=0}k_1\begin{pmatrix}
d-1\\
k_1,k_2
\end{pmatrix}^2t_1^{2k_1}t_2^{2k_2}\right)\left(\frac{1}{t_2}\sum\limits_{k_{1}=0, k_2=1}k_2\begin{pmatrix}
d-1\\
k_1,k_2
\end{pmatrix}^2t_1^{2k_1}t_2^{2k_2}\right)}{
\left(\sum\limits_{k_1,k_2}\begin{pmatrix}
d-1\\
k_1,k_2
\end{pmatrix}^2t_1^{2k_1}t_2^{2k_2}\right)^2},
\\L_{21}&=L_{12},
\\L_{22}&=\frac{\frac{1}{t_2^2}\sum\limits_{k_1=0,k_2=1}k_2^2\begin{pmatrix}
d-1\\
k_1,k_2
\end{pmatrix}^2t_1^{2k_1}t_2^{2k_2}}{\sum\limits_{k_1,k_2}\begin{pmatrix}
d-1\\
k_1,k_2
\end{pmatrix}^2t_1^{2k_1}t_2^{2k_2}}-\frac{\left(\frac{1}{t_2}\sum\limits_{k_{1}=0, k_2=1}k_2\begin{pmatrix}
d-1\\
k_1,k_2
\end{pmatrix}^2t_1^{2k_1}t_2^{2k_2}\right)^2}{
\left(\sum\limits_{k_1,k_2}\begin{pmatrix}
d-1\\
k_1,k_2
\end{pmatrix}^2t_1^{2k_1}t_2^{2k_2}\right)^2}.
\end{align*}
Therefore,
\[
E(3,d)=\pi^{-\frac{3}{2}}\Gamma\left(\frac{3}{2}\right)\int_0^\infty\int_0^\infty\sqrt{L_{11}L_{22}-L_{12}L_{21}}\,dt_1dt_2.
\]
\end{example} 
Next we provide some numerical results. We numerically compute $E(n,d)$ for $n\in\{2,3,4\}$, and show them
in Table \ref{table:Ed} (for $d\leq 10$) and Figure \ref{fig:Ed}b (for $d\leq 20$). We also plot the lower and upper bounds for $E(2,d)$ obtained in Theorem~\ref{thm: estimate for E(d)} and compare them with its numerical computation, see Figure \ref{fig:Ed}a. We note that for small $n$ and  $d$ (namely, $n\leq 5$ and $d\leq 4$), $E(n,d)$  can also be computed numerically via the probabilities $p_i$ of observing exactly $i$ equilibria using~\eqref{eq: definition E}. This direct approach has been used  in~\cite{HTG12} and~\cite{gokhale:2010pn}, which  our results are compatible with. Furthermore, as a consequence of Corollary 2,  we can derive   the exact  expected number of stable equilibria  for $n = 2$, which is equal to half of the expected number of equilibria given in the first row of Table \ref{table:Ed}.

\begin{table}[ht]
\caption{Expected number of internal equilibria  for some  values of $d$ and $n$. }
\vspace{0.2cm}
\begin{tabular}{l c l*{7}{c}r}
d                           & 2 & 3 & 4 & 5 & 6  & 7 & 8 & 9 & 10 \\
\hline
\hline
$n = 2$    &         0.5 & 0.77 & 0.98 & 1.16 & 1.31 & 1.47 & 1.60 & 1.73& 1.84 \\
$n = 3$            & 0.25 & 0.57 & 0.92 & 1.29 & 1.67 & 2.06 & 2.46 & 2.86 & 3.27 \\
$n = 4$            & 0.125 & 0.41 & 0.84 & 1.39 & 2.05 & 2.81  & 3.67 & 4.62 & 5.66  \\
\end{tabular}
\label{table:Ed}
 \end{table}

\begin{figure}
\centering
\includegraphics[width = 0.9\linewidth]{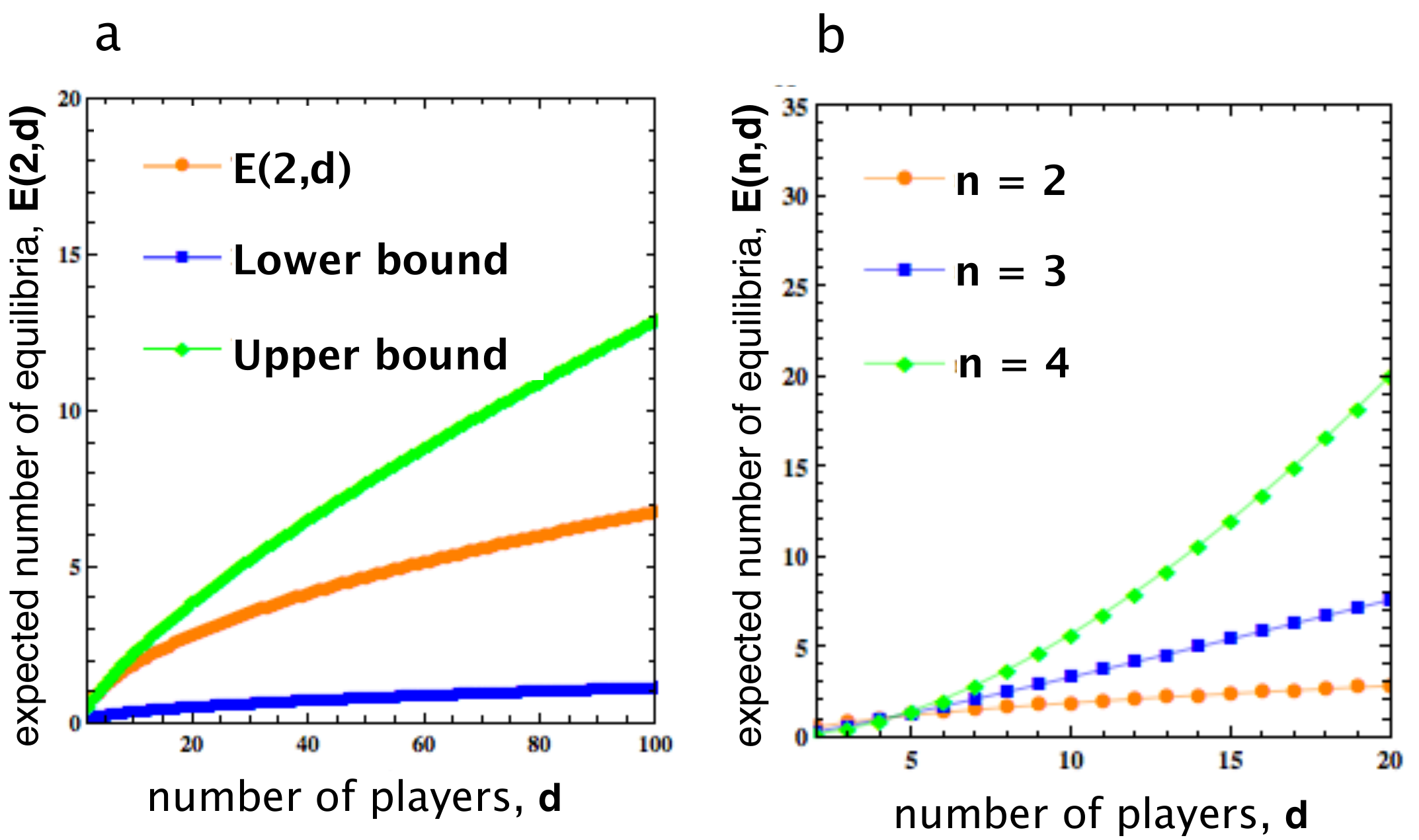}
\caption{\textbf{(a) Expected number of equilibria, $E(2,d)$, for varying the number of players in the game, and the upper and lower bounds obtained in Theorem~\ref{thm: estimate for E(d)}}. \textbf{(b) Expected number of equilibria, $E(n,d)$, for varying the number of players and strategies in the game}. We plot for different number of strategies: $n = 2$, 3 and 4. The payoff entries of the d-player n-strategy game are randomly drawn from the normal distribution.  In general, the larger the number of players in the game, the higher the  number of equilibria  one can expect if the payoff entries of the game are randomly chosen. We observe that for small $d$, $E(n,d)$ increases with $n$ while it decreases  for large $d$ (namely, $d \geq 5$, see also Table \ref{table:Ed}). The results were obtained  numerically  using Mathematica. }
\label{fig:Ed}
\end{figure} 

\section{Discussion}
\label{sec: discussion}
In the evolutionary game literature, mathematical results have been mostly provided for two-player games \cite{hofbauer:1998mm,nowak:2006bo}, despite the abundance of multi-player game examples in nature \cite{broom:1997aa,gokhale:2010pn}. The mathematical study for multi-player games has only attracted more attention recently, apparently because the resulting complexity and dynamics from the games are significantly magnified with the number of the game participants \cite{broom:1997aa,HTG12,wu2013dynamic}. As seen from our analysis, multi-player games introduce non-linearity into the inducing equations, because the fitness functions are polynomial instead of linear as in the pairwise interactions  \cite{hofbauer:1998mm,gokhale:2010pn,wu2013dynamic}. It is even more so if we consider scenarios in which the payoff matrix is non-linear naturally, such as for public goods games with non-linear returns \cite{pacheco:2009aa}, as observed in animals hunting \cite{stander1992cooperative,packer1997foraging}. In addition, as the number of strategies increases, one needs to deal with systems of multivariate polynomials. 
This seemingly complexity of the general multi-player multi-strategy games makes the analysis of the equilibrium points extremely difficult. For instance, in \cite{HTG12}, as the analysis was based on an  explicit calculation of the zeros of systems of polynomials, it cannot go beyond games with a small number of players (namely, $d \leq 5$), even for the simplest case of $n = 2$. 
Here we have taken a different approach based on the well-established theory of random polynomials. We have derived a computationally implementable formula, $E(n,d)$, of the mean number of equilibria in a general random $d$-player game with $n$ strategies. For $n = 2$ and with an arbitrary $d$, we have derived    asymptotic upper and lower bounds of $E(2,d)$. An interesting implication of these results is that although the expected number of equilibria tends to infinity when $d$ increases, the probability of seeing the maximal possible number of equilibria tends to 0. This is a notable observation since knowing the maximal number of equilibrium points in an evolutionary process is insightful and has been of special focus in biological contexts \cite{levin:2000aa,gokhale:2010pn}. Furthermore, for $d = 2$ with an arbitrary $n$, we have derived the exact value  $E(n,2) = 2^{1 - n}$, recovering results obtained in \cite{HTG12}. 
In the general case, based on the formula that we have derived, we have been able to numerically calculate $E(n,d)$, thereby going beyond the analytical and numerical computations from previous work that made use of the direct approach. 
As a consequence of the results obtained for the extreme cases, we could also derive the boundaries for the expected number of equilibrium points when  counting only the stable ones. Note that some studies have been carried out to study the stable equilibrium points or evolutionary stable strategies (ESS) \cite{maynard-smith:1982to} in a random evolutionary game, but it has been mostly done for two-player games, see for example  \cite{haigh1988distribution,kontogiannis2009support,hart2008evolutionarily}. 
As such, we have provided further understanding with respect to the  asymptotic behaviour of the expected number of stable equilibria for  multi-player random games, though with only two strategies, leaving the more general case for future study.

On the other hand, in the random polynomial literature, the study of distribution of zeros of system of random polynomials as described in~\eqref{eq: eqn for systems}, especially in the univariate case, has been carried out by many authors, see for instance~\cite{EK95} for a nice exposition and~\cite{TaoVu14} for the most recent results. The most well-known classes of polynomials are: flat polynomials or Weyl polynomials for $a_i\colonequals\frac{1}{i!}$, elliptic polynomials or binomial polynomials for $a_i\colonequals\sqrt{\begin{pmatrix}
d-1\\
i
\end{pmatrix}}$ and Kac polynomials for $a_i\colonequals 1$. We emphasize the difference between the polynomial studied in this paper with the elliptic case: $a_i=\begin{pmatrix}
d-1\\
i
\end{pmatrix}$ are binomial coefficients, not their square root. In the elliptic case, $v(x)^TCv(y)=\sum_{i=1}^{d-1}\begin{pmatrix}
d-1\\
i
\end{pmatrix}x^iy^i=(1+xy)^{d-1}$, and as a result $E=E(2,d)=\sqrt{d-1}$. While in our case, because of the square in the coefficients,
$v(x)^TCv(y)=\sum_{i=1}^{d-1}\begin{pmatrix}
d-1\\
i
\end{pmatrix}^2x^iy^i$ is no longer a generating function. Whether one can find an exact formula for $E(2,d)$ is unclear. For the multivariate situation, the exact formula for $E(n,2)$ is interesting by itself and we could not be able to find it in the literature. Due to the complexity in the general case $d,n\geq 3$, further research is required.   

In short, we have described a novel approach to calculating and analyzing the expected number of (stable) equilibrium points in a general random evolutionary game, giving insights into the overall complexity of such dynamical systems as the players and the strategies in the game increase. Since the theory of random polynomials is rich, we envisage that our method could be extended to obtain results for other more complex scenarios such as games having a payoff matrix  with dependent entries and/or with general distributions.   
\section{Appendix} 
\subsection{Properties of $a_k$ }
\label{sec: properties of a}
In the following we prove that $a_{2d-4-k} = a_k$ for all $0 \leq k \leq 2d-4$, i.e.~\eqref{eq: properties of a}. First, we transform $a_k$ as follows 
\begin{align*}
a_k &=\sum\limits_{\substack{0\leq i\leq d-1\\ 0\leq j\leq d-2\\i+j=k}}\begin{pmatrix}
d-1\\
i
\end{pmatrix}^2\begin{pmatrix}
d-2\\
j
\end{pmatrix}^2-\sum\limits_{\substack{0\leq i'\leq d-2\\ 0\leq j'\leq d-2\\i'+j'=k-1}}\begin{pmatrix}
d-2\\
i'
\end{pmatrix}\begin{pmatrix}
d-1\\
i'+1
\end{pmatrix}\begin{pmatrix}
d-2\\
j'
\end{pmatrix}\begin{pmatrix}
d-1\\
j'+1
\end{pmatrix} \\
 & =
 \begin{pmatrix} 
 d-2 \\
 k 
 \end{pmatrix}^2 + \sum\limits_{\substack{0\leq i\leq d-2\\ 0\leq j\leq d-2\\i+j=k-1}}\begin{pmatrix}
d-1\\
i+1
\end{pmatrix}^2\begin{pmatrix}
d-2\\
j
\end{pmatrix}^2-\sum\limits_{\substack{0\leq i'\leq d-2\\ 0\leq j'\leq d-2\\i'+j'=k-1}}\begin{pmatrix}
d-2\\
i'
\end{pmatrix}\begin{pmatrix}
d-1\\
i'+1
\end{pmatrix}\begin{pmatrix}
d-2\\
j'
\end{pmatrix}\begin{pmatrix}
d-1\\
j'+1
\end{pmatrix}\\
& =
 \begin{pmatrix} 
 d-2 \\
 k 
 \end{pmatrix}^2 + \sum\limits_{\substack{0\leq i\leq d-2\\ 0\leq j\leq d-2\\i+j=k-1}}\begin{pmatrix}
d-1\\
i+1
\end{pmatrix}\begin{pmatrix}
d-2\\
j
\end{pmatrix}\left(\begin{pmatrix}
d-1\\
i+1
\end{pmatrix}\begin{pmatrix}
d-2\\
j
\end{pmatrix} - 
\begin{pmatrix}
d-2\\
i
\end{pmatrix}\begin{pmatrix}
d-1\\
j+1
\end{pmatrix}
 \right) \\
 & =
 \begin{pmatrix} 
 d-2 \\
 k 
 \end{pmatrix}^2 + \sum\limits_{\substack{0\leq i \leq j \leq d-2\\i+j=k-1}}\left(\begin{pmatrix}
d-1\\
i+1
\end{pmatrix}\begin{pmatrix}
d-2\\
j
\end{pmatrix} - 
\begin{pmatrix}
d-2\\
i
\end{pmatrix}\begin{pmatrix}
d-1\\
j+1
\end{pmatrix}
 \right)^2
\end{align*}
Now we prove that $a_{2d-4-k} = a_{k}$. Indeed, we have 
\begin{align*}
a_{2d - 4 - k} &=
 \begin{pmatrix} 
 d-2 \\
 2d - 4 - k 
 \end{pmatrix}^2 + \sum\limits_{\substack{0\leq i \leq j \leq d-2\\i+j=2d-k-5}}\left(\begin{pmatrix}
d-2\\
i+1
\end{pmatrix}\begin{pmatrix}
d-2\\
j
\end{pmatrix} - 
\begin{pmatrix}
d-2\\
i
\end{pmatrix}\begin{pmatrix}
d-1\\
j+1
\end{pmatrix}
 \right)^2 \\
 &\text{(we use here the transformations $i = d-3 - i$ and $j = d-3 - j$)}\\
 &=
 \begin{pmatrix} 
 d-2 \\
 2d - 4 - k 
 \end{pmatrix}^2 + \sum\limits_{\substack{-1\leq j \leq i \leq d-3\\i+j=k-1}}\left(\begin{pmatrix}
d-1\\
i+1
\end{pmatrix}\begin{pmatrix}
d-2\\
j+1
\end{pmatrix} - 
\begin{pmatrix}
d-2\\
i+1
\end{pmatrix}\begin{pmatrix}
d-1\\
j+1
\end{pmatrix}
 \right)^2 \\
&=
 \begin{pmatrix} 
 d-2 \\
 2d - 4 - k 
 \end{pmatrix}^2 + \sum\limits_{\substack{-1\leq j \leq i \leq d-3\\i+j=k-1}}\left(\begin{pmatrix}
d-1\\
i+1
\end{pmatrix}\begin{pmatrix}
d-2\\
j
\end{pmatrix} - 
\begin{pmatrix}
d-2\\
i
\end{pmatrix}\begin{pmatrix}
d-1\\
j+1
\end{pmatrix}
 \right)^2. \\
\end{align*}
Since for $j = -1$ and $i  = k$ we have 
\[
\left(\begin{pmatrix}
d-1\\
i+1
\end{pmatrix}\begin{pmatrix}
d-2\\
j
\end{pmatrix} - 
\begin{pmatrix}
d-2\\
i
\end{pmatrix}\begin{pmatrix}
d-1\\
j+1
\end{pmatrix}
 \right)^2 =  \begin{pmatrix} 
 d-1 \\
 k 
 \end{pmatrix}^2,
\]
and for  $j = d-2$ and $i  = k-1 - (d-2)$, we have 
\begin{align*}
\left(\begin{pmatrix}
d-1\\
i+1
\end{pmatrix}\begin{pmatrix}
d-2\\
j
\end{pmatrix} - 
\begin{pmatrix}
d-2\\
i
\end{pmatrix}\begin{pmatrix}
d-1\\
j+1
\end{pmatrix}
 \right)^2 &= 
 \left(\begin{pmatrix}
d-1\\
k-(d-2)
\end{pmatrix}- 
\begin{pmatrix}
d-2\\
k-1-(d-2)
\end{pmatrix}
 \right)^2 \\
  &=
\begin{pmatrix}
d-2\\
k-(d-2)
\end{pmatrix}^2  \\
  &=
\begin{pmatrix}
d-2\\
2d-4-k
\end{pmatrix}^2  ,
\end{align*}
it follows that $a_{2d-4-k} = a_k$ for all $0 \leq k \leq 2d-4$.

\subsection{Detailed computation of $f(1)$}
\label{sec: f(1)}
We use the following identities involving the square of binomial coefficients.
\begin{align*}
&M_d(1)=\sum\limits_{k=0}^{d-1}\begin{pmatrix}
d-1\\
k
\end{pmatrix}^2=\begin{pmatrix}
2(d-1)\\
d-1
\end{pmatrix},
\\& A_d(1)=(d-1)^2M_{d-1}(1)=(d-1)^2\begin{pmatrix}
2(d-2)\\
d-2
\end{pmatrix},
\\& B_d(1)=\sum\limits_{k=1}^{d-1}k\begin{pmatrix}
d-1\\
k
\end{pmatrix}^2=\frac{d-1}{2}\begin{pmatrix}
2(d-1)\\
d-1
\end{pmatrix}.
\end{align*}
Therefore, we have
\begin{align*}
f(1)&=\frac{1}{\pi}\frac{\sqrt{A_d(1)M_d(1)-B_d(1)^2}}{M_d(1)}
\\&=\frac{1}{\pi}\frac{\sqrt{(d-1)^2\begin{pmatrix}
2(d-1)\\
d-1
\end{pmatrix}\left[\begin{pmatrix}
2(d-2)\\
d-2
\end{pmatrix}-\frac{1}{4}\begin{pmatrix}
2(d-1)\\
d-1
\end{pmatrix}\right]}}{\begin{pmatrix}
2(d-1)\\
d-1
\end{pmatrix}}
\\&=\frac{d-1}{\pi}\times\sqrt{\frac{\begin{pmatrix}
2(d-2)\\
d-2
\end{pmatrix}}{\begin{pmatrix}
2(d-1)\\
d-1
\end{pmatrix}}-\frac{1}{4}}
\\&=\frac{d-1}{\pi}\times\sqrt{\frac{d-1}{2(2d-3)}-\frac{1}{4}}
\\&=\frac{d-1}{2\pi\sqrt{2d-3}},
\end{align*}
where we have used the identity
$
\begin{pmatrix}
2(n+1)\\
n+1
\end{pmatrix}=\frac{n+1}{2(2n+1)}\begin{pmatrix}
2n\\
n
\end{pmatrix}.
$
\subsection{Alternative proof of the fifth property in Lemma~\ref{lemma: density of zeros}}
\label{sec: alternative proof}
We show here an alternative proof of the fifth property without using the first one in Lemma~\ref{lemma: density of zeros}.

Since $M_d\left(\frac{1}{t}\right)=t^{2(1-d)M_d(t)}$, we have
\begin{align*}
M_d'\left(\frac{1}{t}\right)&=-t^{3-2d}\left[2(1-d)M_d(t)+tM_d'(t)\right],
\\M_d''\left(\frac{1}{t}\right)&=t^{4-2d}\left[2(3-2d)(1-d)M_d(t)+2(3-2d)tM_d'(t)+t^2M_d''(t)\right].
\end{align*}
Therefore, from~\eqref{eq: A and B interms of M}, we have
\belowdisplayskip=0pt
\begin{equation*}
B_d\left(\frac{1}{t}\right)=\frac{1}{2}M_d'\left(\frac{1}{t}\right)=-t^{3-2d}\left[(1-d)M_d(t)+\frac{1}{2}tM_d'(t)\right],
\end{equation*}
and
\begin{align*}
A_d\left(\frac{1}{t}\right)&=\frac{t}{4}\left[M_d'\left(\frac{1}{t}\right)+\frac{1}{t}M_d''\left(\frac{1}{t}\right)\right]
\\&=\frac{1}{4}t^{4-2d}\left[4(1-d)^2+(5-4d)tM_d'(t)+t^2M_d''(t)\right].
\end{align*}
Hence
\begin{align*}
A_d\left(\frac{1}{t}\right)M_d\left(\frac{1}{t}\right)-B_d\left(\frac{1}{t}\right)^2&=\frac{1}{4}t^{6-2d}\left[4(1-d)^2M_d(t)^2+(5-4d)tM_d'(t)M_d(t)+t^2M_d''(t)M_d(t)\right]
\\&\qquad -t^{6-4d}\left[(1-d)M_d(t)+\frac{1}{2}tM_d'(t)\right]^2
\\&=\frac{1}{4}t^{6-2d}\left[tM_d(t)M_d'(t)+t^2M_d(t)M_d''(t)-t^2M_d'(t)^2\right]
\\&=\frac{1}{4}t^{8-2d}\left[\frac{1}{t}M_d(t)M_d'(t)+M_d(t)M_d''(t)-M_d'(t)^2\right].
\end{align*}
Therefore,
\begin{align*}
f\left(\frac{1}{t}\right)&=\frac{1}{\pi}\frac{\sqrt{A_d\left(\frac{1}{t}\right)M_d\left(\frac{1}{t}\right)-B_d\left(\frac{1}{t}\right)^2}}{M_d\left(\frac{1}{t}\right)}\\
\\&=\frac{1}{\pi}\frac{t^{4-2d}\sqrt{\frac{1}{4t}\left[M_d(t)M_d'(t)+tM_d(t)M_d''(t)-tM_d'(t)^2\right]}}{t^{2-2d}M_d(t)}
\\&=t^2\frac{1}{\pi}\frac{\sqrt{\frac{1}{4t}\left[M_d(t)M_d'(t)+tM_d(t)M_d''(t)-tM_d'(t)^2\right]}}{M_d(t)}
\\&=t^2f(t).
\end{align*}
\subsection{Proof of Lemma~\ref{lem: det L}}
\label{sec: computeDetL}
Denoting $\Sigma = 1+\sum_{k=1}^{n-1} t_k^2$, we have 
\vskip5mm
\begin{align*}
\det L &= \frac{1}{\Sigma^{2(n-1)}} 
\det \begin{pmatrix}
\Sigma-t_1^2 & -t_1 t_2& \dots & -t_1 t_{n-1} \\
 -t_2 t_1& \Sigma-t_2^2 & \dots & -t_2 t_{n-1} \\
\dots & \dots & \dots  & \dots \\
-t_{n-1} t_1&  -t_{n-1} t_2 & \dots &\Sigma-t_{n-1}^2
\end{pmatrix}
 \\&= \frac{1}{t_1 \dots t_{n-1}} \frac{1}{\Sigma^{2(n-1)}} \det \begin{pmatrix}
t_1(\Sigma-t_1^2) & -t_1 t_2^2& \dots & -t_1 t_{n-1}^2 \\
 -t_2 t_1^2& t_2( \Sigma-t_2^2) & \dots & -t_2 t_{n-1}^2 \\
\dots & \dots & \dots  & \dots \\
-t_{n-1} t_1^2&  -t_{n-1} t_2^2 & \dots &t_{n-1}(\Sigma-t_{n-1}^2) 
 \end{pmatrix}
\\&= \frac{1}{t_1 \dots t_{n-1}} \frac{1}{\Sigma^{2(n-1)}} \det \begin{pmatrix}
t_1 & -t_1 t_2^2& \dots & -t_1 t_{n-1}^2 \\
t_2 & t_2( \Sigma-t_2^2) & \dots & -t_2 t_{n-1}^2 \\
\dots & \dots & \dots  & \dots \\
t_{n-1} &  -t_{n-1} t_2^2 & \dots &t_{n-1}(\Sigma-t_{n-1}^2)
\end{pmatrix}
\\&=  \frac{1}{\Sigma^{2(n-1)}}\det  \begin{pmatrix}
1 & - t_2^2& \dots & - t_{n-1}^2 \\
1 &  \Sigma-t_2^2 & \dots & - t_{n-1}^2 \\
\dots & \dots & \dots  & \dots \\
1 &  - t_2^2 & \dots &\Sigma-t_{n-1}^2 
\end{pmatrix}
\\&=\frac{1}{\Sigma^{2(n-1)}} \det \begin{pmatrix}
1 & - t_2^2& \dots & - t_{n-1}^2 \\
0 &  \Sigma & \dots & 0 \\
\dots & \dots & \dots  & \dots \\
0 &  0 & \dots &\Sigma
\end{pmatrix}
\\&=  \frac{1}{\Sigma^n}.
\end{align*}
\paragraph{Acknowledgment}We would like to thank members of the WMS seminar at the CASA at Eindhoven University of Technology, especially Dr. R. van Hassel and Prof. A. Blokhuis for useful discussion. Part of this work was done under the support of   the F.W.O. Belgium (TAH, postdoctoral fellowship 05\_05 TR 7142).

\bibliographystyle{alpha}
\bibliography{GTbib}

\end{document}